\newtheorem{theorem}{Theorem}
\newtheorem{lem}{Lemma}
\newtheorem{prop}{Proposition}
\newtheorem{cor}{Corollary}
\newtheorem{definition}{Definition}
\newtheorem{remark}{Remark}
\newtheorem{assumption}{Assumption}
\newcommand{\smallmat}[1]{\left[ \begin{smallmatrix}#1\end{smallmatrix} \right]}
\newcommand{\bigmat}[1]{\begin{bmatrix} #1 \end{bmatrix}}
\newtheorem{propty}{Property}
\newenvironment{proofof}{\noindent {\em Proof of }}{\hfill \hspace*{1pt}\hfill $\square$\\}
\begin{document}

\title{Dynamic attitude planning for trajectory tracking in underactuated VTOL UAVs} 

\author{Davide~Invernizzi, Marco~Lovera\thanks{D. Invernizzi, M. Lovera are with Department of Aerospace Science and Technology, Politecnico di Milano, Via La Masa 34, 20156, Milano, Italy.} and Luca~Zaccarian\thanks{L. Zaccarian is with CNRS, LAAS, 7 avenue du colonel Roche, F-31400 Toulouse, France, Univ. de Toulouse, LAAS, F-31400 Toulouse, France, and Dipartimento di Ingegneria Industriale, University of Trento, Italy. Email to: davide.invernizzi@polimi.it}}

\maketitle

\begin{abstract}
This paper addresses the trajectory tracking control problem for underactuated VTOL UAVs. According to the different actuation mechanisms, the most common UAV platforms can  achieve only a partial decoupling of attitude and position tasks. Since position tracking is of utmost importance for applications involving aerial vehicles, we propose a control scheme in which position tracking is the primary objective.  To this end, this work introduces the concept of attitude planner, a dynamical system through which the desired attitude reference is processed to guarantee the satisfaction of the primary objective: the attitude tracking task is considered as a secondary objective which can be realized as long as the desired trajectory satisfies specific trackability conditions. Two numerical simulations are performed by applying the proposed control law to a hexacopter with and without tilted propellers, which accounts for unmodeled dynamics and external disturbances not included in the control design model. 
\end{abstract}

\begin{IEEEkeywords}
UAVs, tiltrotor, geometric control, thrust vectoring, planning, trajectory tracking
\end{IEEEkeywords}


\section{Introduction}
The increasing demand of complex and challenging applications involving Vertical-Take-Off and Landing Unmanned Aerial Vehicles (VTOL UAVs) has led to the design of novel configurations  to overcome the maneuverability limitation of standard platforms. In particular, multirotor UAVs with coplanar propellers cannot fulfill at the same time attitude and position tasks. We refer to these platforms as \emph{vectored-thrust} UAVs because their propulsive system can deliver a control force only along a fixed direction within the airframe. By designing an actuation mechanism that can change the thrust direction, a net force can be produced with respect to the aircraft frame, which allows to handle more complex maneuvering. \emph{Thrust-vectoring} UAVs can be realized by mounting the propellers in a non-coplanar fashion \cite{Romero2007,Jiang2014,Crowther2011,Rajappa2015} or by employing servo-actuators to dynamically adjust their orientation \cite{Hua2015,Ryll2015,Oosedo2015,InvernizziCCTA18}. While the dynamically tiltable configuration is efficient in terms of power consumption and may achieve full actuation \cite{Ryll2015}, it has a more complex mechanical structure. On the other hand, the fixed-tilted configuration is usually not fully actuated, as the control force can be delivered only in a compact region around the UAV vertical axis. The trajectory tracking control problem for the latter configuration is challenging because, due to underactuation, only a partial decoupling is possible between attitude and position objectives.

Much literature is devoted to the trajectory tracking problem for the vectored-thrust configuration \cite{Mahony} but few works address the thrust-vectoring one \cite{Franchi2016,Hua2015}. First of all, we show that it is impossible to track an arbitrary full pose trajectory (independent attitude and position) also for thrust-vectoring configurations, when the vectoring capability is limited, in particular, when the control force can be delivered only within a conic region around the vertical body axis. Therefore, we propose a control paradigm in which the realization of attitude tracking is subdued to the achievement of position tracking, which is the primary objective. Indeed, ensuring position tracking is mandatory in most application involving UAVs to guarantee safe operations. Our approach hinges on the development of a dynamic attitude planner, in the spirit of \cite{Hua2015,Franchi2016}. A dynamic attitude filter was proposed in \cite{Zhao2013} and applied to the vectored-thrust platform within a backstepping controller strategy,  with the purpose of avoiding the analytical computation of the reference angular velocity and acceleration. Instead, our design aims at prioritizing control objectives by properly modifying the desired attitude while guaranteeing compliance of the control law with the actuation constraints.  This is motivated by the fact that there always exists an attitude which guarantees that the platform can deliver, for the most common approximation of the actuation mechanisms, the control force required by position tracking. The control design is performed in two stages. The first step is the design of a control force and torque that ensures robust tracking of any desired trajectory that possesses certain smoothness and boundedness properties. In the second step, we show that attitude planning strategies proposed in existing works, \emph{e.g.},  \cite{Leeetal2010,InvernizziAUTO2018}, fit within the present robust control design framework. For what concerns the first step, this work takes inspiration from \cite{Roza2014}, in which the emphasis was on defining classes of position and attitude controllers that stabilize the UAV at a given position. Our contribution extends the results of \cite{Roza2014}, which dealt only with stabilization (constant reference) and vectored-thrust platforms and the results of \cite{InvernizziACC18}, which dealt only with a static attitude planning and vectored-thrust platforms. As done in previous contributions, we take advantage of the cascaded structure of the equations of motion for systems evolving on the manifold $\mathrm{SO}(3)\times \mathbb{R}^3$ and with the body-fixed frame coinciding with a principal axes frame. In particular, we tackle attitude tracking by selecting a control torque that does not cancel non-harmful nonlinearities, it has a simpler expression than the ones usually employed for UAV attitude control \cite{Leeetal2010}. This choice is a representative candidate for a large class of admissible attitude control laws that guarantee uniform asymptotic tracking (UAT), in the sense that it does not lead necessarily to an autonomous closed-loop system. Then, by designing a control force suitable to handle the different actuation constraints and that guarantees a (small signal) Input-to-State Stability (ISS) property for the perturbed position error dynamics, we study the stability of the cascade between the attitude and position closed-loop within the framework of differential inclusions. This approach simplifies the analysis of the corresponding cascaded non-autonomous system: the proof technique relies on casting the control problem as a stability problem for a compact attractor with dynamics satisfying regularity conditions that ensure robustness of the stability property against a very large class of (sufficiently small) perturbations \cite[Chapter 7]{Goebel2012}. Our proof is based on reduction theorems \cite{MaggioreZac17} which have been exploited with a different control strategy in \cite{Michieletto2017b} to address set-point tracking (stabilization) in multirotor UAVs. The control law guarantees robust tracking with semi-global properties and we show that by properly selecting the attitude planner, the control law satisfies the actuation constraints of both vectored-thrust and thrust-vectoring platforms.

The paper is organized as follows. The model for control design
is derived in Section \ref{sec:input_constr}  where the actuation limitations for the most common propulsive systems of VTOL UAVs are explained. Section \ref{sec:cont_probl} introduces the tracking control problem in $\mathrm{SO}(3)\times \mathbb{R}^3$ and in Section \ref{sec:cont_strat} the priority-oriented control paradigm is presented and the main result about the stability of the closed-loop system is stated. Section \ref{sec:att_plan} presents the development of the dynamic attitude planner. In Section \ref{sec:tilt_case}, two numerical examples are proposed
to test and verify the robustness of the control law against unmodelled dynamics and external disturbances on a simulation model of a hexacopter UAV with and without tilted propellers.

\begin{figure}
\begin{centering}
\includegraphics[scale=0.2]{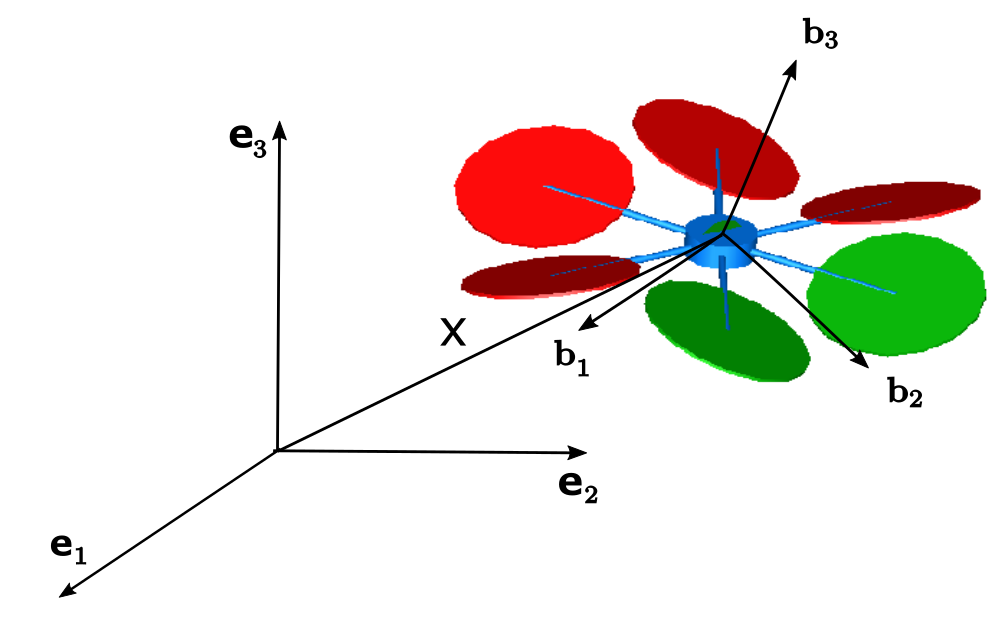}
\par\end{centering}
\caption{Reference frame definition.\label{fig:Reference-frame-definition}}
\end{figure}

\subsection{Notation}\label{sec:notation}
Throughout this paper, we use the following notation. For $x\in\mathbb{R}^n$, $\Vert x\Vert:=\sqrt{x_1^2+\ldots+x_n^2}$ is the corresponding Euclidean norm. For $A\in\mathbb{R}^{n\times n}$, $\Vert A\Vert_F:=\sqrt{\mbox{tr}(A^T A)}$ is the Frobenius norm, the minimum and maximum eigenvalues are denoted as $\lambda_m(A)$ and $\lambda_M(A)$, respectively, and $\mbox{skew}(A):=\frac{A-A^T}{2}$ is the skew-symmetric part of $A$. We use the compact notation $A\in \mathbb{R}^{n\times n}_{>0}$ to represent a positive definite matrix. 
The unit vectors corresponding to the canonical basis in $\mathbb{R}^n$ are $e_i:=[0,\ldots,\,1,\ldots,\,0]^T$ for $i=1,\,...,\,n$. The identity matrix in $\mathbb{R}^{n\times n}$ is denoted as $I_n:=[e_1,\cdots e_i\cdots,e_n]$. Given a symmetric matrix $K\in\mathbb{R}^{3\times 3}$, such that $\mbox{tr}(K)I_3-K\in\mathbb{R}^{3\times 3}_{>0}$,  and a rotation matrix $R\in\mathrm{SO}(3)$, we define the \emph{navigation} function as $\Psi_K(R):=\frac{1}{2}\mbox{tr}(K(I_3-R))$. If matrix $K$ has distinct eigenvalues, the navigation function has four non-degenerate critical points with one global minimum at $R=I_3$ \cite{koditschek1989}. The notation $R_u(\theta)$ is used to represent the rotation matrix corresponding to a rotation around a unit axis $u\in\mathbb{S}^2$ of an angle $\theta\in\mathbb{R}$. The normalized distance on $\mathrm{SO}(3)$, induced by the Frobenius norm in $\mathbb{R}^{3\times 3}$, is denoted as $\Psi(R):=\tfrac{1}{8}\Vert R-I\Vert_F=\sqrt{\frac{1}{4}\mbox{tr}(I_3-R)}$. By exploiting the angle-axis parametrization, $\Psi(R)$ can be written in terms of the rotation angle $\theta$ as $\Psi(R_u(\theta))=\sqrt{\frac{1-\cos(\theta)}{2}}$ \cite{koditschek1989}. Given a navigation function $\Psi_K$, the following inequality is valid:
\begin{equation}\label{eq:nav_funct}
\lambda_m(\mbox{tr}(K)I_3-K)\Psi^2(R)\leq\Psi_K(R)\leq \lambda_M(\mbox{tr}(K)I_3-K)\Psi^2(R).
\end{equation}
The set of piecewise-continuous and bounded functions is $L_\infty$. Given $f\in L_\infty$, we denote $f_m,\,f_M\in\mathbb{R}_{\geq 0}$ as lower and upper bounds, respectively, \emph{i.e.}, $f_m\leq \Vert f(t)\Vert \leq f_M,\,\forall t\geq 0$. The gradient of a real valued function $f:\mathbb{R}^n\rightarrow \mathbb{R}$ is denoted as $\nabla_x f(x)$.  Given the vectors $x,y$ we often denote $(x,y):=[x^T,y^T]^T$. Given $\omega\in\mathbb{R}^3$, the \emph{hat} map $\hat{\cdot}:\mathbb{R}^3\rightarrow \mathfrak{so}(3)$ is such that $\hat{\omega}y=\omega\times y,\;\forall y\in\mathbb{R}^3$. The inverse of the \emph{hat} map is the \emph{vee} map, denoted as  $(\cdot)^\vee:\,\mathfrak{so}(3)\rightarrow\mathbb{R}^{3}$, which is known to satisfy the useful property:
\begin{equation}\label{eq:veemap}
(R\hat{\omega}R^T)^\vee=R\omega \quad \mbox{for} \quad R\in\mathrm{SO}(3). 
\end{equation}

\section{Dynamical models for thrust-vectoring UAVs}
\label{sec:input_constr}

This section recalls the equations of motion of a rigid body moving in a constant gravity field $-ge_3,\,g\in\mathbb{R}_{>0}$, and actuated by a control wrench $w_c:=(f_c,\tau_c)$, where $f_c$ and $\tau_c$ are the control force and torque, respectively. These assumptions may be employed to design control laws for a large class of small-scale UAVs in which the components are sufficiently rigid, the flight conditions are such that the aerodynamics effects can be dominated with high gain control and the actuators dynamics is fast enough. The control wrench is generated by means of different kind of propulsive systems.  In order to obtain a control design independent from the specific actuation mechanism, we will assume the control wrench as the input variable. Then, we will resort to approximate models of the wrench map (from the physical inputs to the delivered wrench), according to most common actuation mechanisms. In particular, the limitations in terms of thrust-vectoring capabilities are formally defined for the most common UAV configurations.

The motion of a rigid body can be described by the one of a body-fixed frame $\mathcal{F}_B=(O_B,\{b_1,b_2,b_3\})$ with respect to an inertial frame $\mathcal{F}_I=(0_I,\{e_1,e_2,e_3\})$, as shown in Figure~\ref{fig:Reference-frame-definition} (for the sake of simplicity we assume that the inertial frame triad coincides with the standard basis of $\mathbb{R}^3$).
Under the approximation introduced at the beginning of this section, the equations of motion for control design are \cite{Mahony}:
\begin{align}\dot{x} & =v \label{dyn4cont1}\\
\dot{R} & =R\hat{\omega}\label{dyn4cont2}\\
m\dot{v} & =-mge_{3}+Rf_{c} \label{dyn4cont3}\\
J\dot{\omega} & =-\hat{\omega} J\omega+\tau_{c},\label{dyn4cont4}
\end{align}
where $x\in\mathbb{R}^3$ and $v\in\mathbb{R}^3$ are position and velocity of the center of mass in the inertial frame, $R\in\mathrm{SO}(3)$ and $\omega\in\mathbb{R}^3$ are the rotation matrix and the body angular velocity, while $m\in\mathbb{R}_{>0}$ and $J=J^T\in\mathbb{R}^{3\times 3}$ are the mass and inertia matrix with respect to the principal axes of the rigid body. According to our choice, the translational motion evolves in the inertial frame whereas the rotational motion in the body one. 

When the range of the map from the set of physical inputs $U$ to the delivered wrench, namely $U \ni u\mapsto w_c(u)$,  spans $\mathbb{R}^6$, the system is fully actuated. In the following, the control torque $\tau_c$ is assumed to span $\mathbb{R}^{3}$, whilst the actuation mechanism allows to deliver the control force $f_c$ only on a compact subset of $\mathbb{R}^3$. As a consequence, the tracking control problem, that we are going to present in the next Section, becomes more challenging. Within this category of UAVs, a further classification distinguishes between \emph{vectored-thrust} \cite{InvernizziACC18} and \emph{thrust-vectoring} configurations \cite{Franchi2016}.
\begin{enumerate}[label=(\alph*)]
\item The first class includes vehicles with a \emph{vectored-thrust} configuration, typical of multirotor UAVs with coplanar propellers. In this case (Figure \ref{fig:act_const}-a), the control force can be delivered only in the common direction of the propellers axis. Hence, the control force $f_c:\mathbb{R}_{\geq 0}\rightarrow \mathbb{R}^3$ has to satisfy the following constraint:
\begin{align}
f_{c_3}(t)>0,\quad
f_{c_1}(t)=f_{c_2}(t)=0\quad\forall t\geq 0.\label{eq:coplanar_const}
\end{align}
This configuration is adopted in most multirotor UAVs thanks to the inherent mechanical simplicity combined with good performance in many flight conditions \cite{Mahony}. This configuration allows only for arbitrary rotational motion around the thrust axis.
\item The second class comprises UAVs with \emph{thrust-vectoring} capabilities. Multirotor UAVs both with fixed-tilted \cite{Rajappa2015} or dynamically tiltable propellers \cite{Ryll2015}, have been proposed in recent years to overcome the intrinsic maneuverability limitation of the coplanar platform. While the dynamically tiltable configuration makes the system fully actuated if the tilt angle of the servo-actuators is not limited, this is not true for the fixed-tilted configuration, for which the maximum inclination at which the rotors are mounted is limited by the efficiency loss: the power consumption in hover is large and increases with the inclination of the propellers.  In this case  (Figure \ref{fig:act_const}-b), the control force $f_{c}$ can span, approximately, the conic region defined around the third body axis:
\begin{align}
&0<\cos(\theta_{M})\leq\frac{f_{c}(t)^{T}e_{3}}{\Vert f_{c}(t)\Vert }=\cos(\theta_c(t))\quad\forall t\geq 0,\label{eq:tilt_const}
\end{align}
where $\theta_M$ is the maximum tilt angle.   
\end{enumerate}
Finally, for all the cases above, we assume that the control force is bounded by a positive scalar $T_M\in\mathbb{R}_{>0}$:
\begin{equation}
\Vert f_c(t)\Vert\leq T_M \quad \forall t\geq 0.
\end{equation}
This constraint is required to account for actuators saturations, in particular the limited spinning velocity of propellers.
\begin{figure}
\begin{centering}
\includegraphics[scale=0.7]{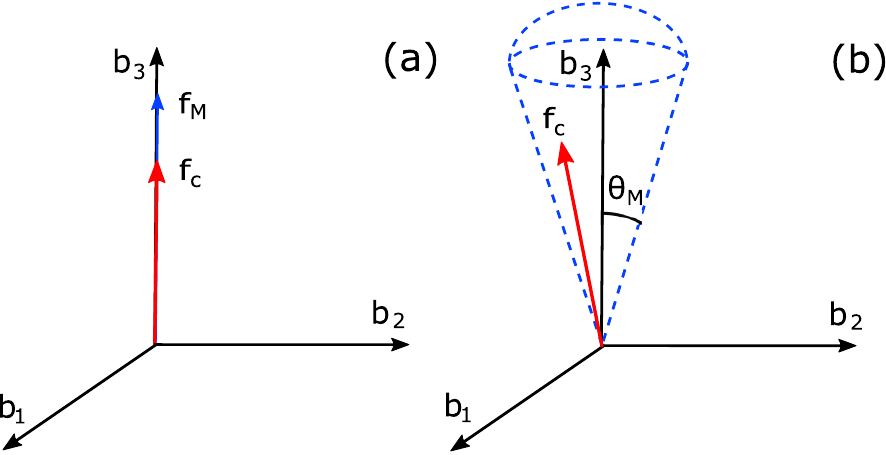}
\par\end{centering}
\caption{Vectored-thrust (a) - thrust-vectoring (b) configuration. \label{fig:act_const}}
\end{figure}

\section{Control problem: desired trajectory and steady state inputs}\label{sec:cont_probl}
This work deals with the tracking control problem for the system described by \eqref{dyn4cont1}-\eqref{dyn4cont4}. The objective is to track a reference $t\mapsto (R_{d}(t),\omega_d(t),x_{d}(t),v_d(t)) \in T\mathrm{SO}(3)\times\mathbb{R}^6$, where $T\mathrm{SO}(3):=\mathrm{SO}(3)\times \mathbb{R}^3$ is the trivial tangent bundle of $\mathrm{SO}(3)$.

When considering the actuation limitations shown in the previous section, the tracking of an arbitrary reference trajectory in $T\mathrm{SO}(3)\times\mathbb{R}^6$ is not possible due to the platform underactuation. For instance, it is well known that the standard quadrotor cannot hover in non-level attitude and position tracking can be achieved only by properly modifying the attitude. However, platforms with thrust-vectoring capabilities can achieve a certain degree of decoupling between attitude and position tracking when the corresponding trajectories are compatible in some sense. To suitably represent and exploit these degrees of freedom, let us first compute the steady state (short, $ss$) control wrench, which is obtained by inverting the system dynamics: 
\begin{align}
f_{c}^{ss}(t)	&:=mR_{d}^{T}(t)\left(\dot{v}_{d}(t)+ge_{3}\right)\label{eq:feed_force}\\
\tau_{c}^{ss}(t) &:=	J\dot{\omega}_{d}(t)+\hat{\omega}_{d}(t)J\omega_{d}(t).\label{eq:feed_torque}
\end{align}
For arbitrary position and attitude trajectories, it is likely that $f_{c}^{ss}$ will not be compatible with the actuation limitations \eqref{eq:coplanar_const} and \eqref{eq:tilt_const}. On the other hand, because position tracking is mandatory in aerial applications, equation \eqref{eq:feed_force} suggests that the desired attitude can be properly modified to be compliant with the actuation constraints. Indeed, according to equation \eqref{eq:feed_force}, the control force is obtained by rotating the vector $m\left(\dot{v}_{d}+ge_{3}\right)$ by $R_d^T$. The  rationale behind the proposed control is to prioritize position over orientation tracking, as already suggested by \cite{Hua2015} and \cite{Franchi2016}. Following this path, the attitude motion cannot be assigned arbitrarily anymore. However, we will propose a strategy to track the desired attitude as long as it allows to follow the desired position. Whenever this condition cannot be granted, only the closest feasible attitude will be tracked. In particular, we will exploit the fact that when the control force is delivered along the vertical direction of the body frame ($b_3$) (coplanar case), the constraint \eqref{eq:tilt_const} for the thrust-vectoring case is satisfied as well.

Before going on, the following assumptions are required to hold for the desired trajectory.
It is worth remarking that these conditions are not too restrictive for standard applications involving multirotor UAVs. 

\begin{assumption}\label{assum_setpoint} \emph{Smoothness and boundedness of the desired trajectory.} The desired trajectory $t\mapsto (R_{d}(t),\omega_d(t),x_{d}(t),$ $v_d(t)) \in T\mathrm{SO}(3)\times\mathbb{R}^6$ satisfies $\dot{R}_d(t)=R_d(t)\hat{\omega}_d(t)$ and $\dot{x}_d(t)=v_d(t)$, for all $t\geq 0$. Moreover,
\begin{enumerate}
\item the desired trajectory $x_d(\cdot)$ belongs at least to ${C}^4$;
\item the desired acceleration $\dot{v}_d(\cdot)$ and the desired jerk $\ddot{v}_d(\cdot)$ belong to $L_\infty$; in particular, the acceleration profile is such that the nominal force \eqref{eq:feed_force} is bounded by some strictly positive constants $f_m^{ss},\,f_M^{ss}$:
\begin{equation}
0<f^{ss}_m \leq \Vert f^{ss}_c(t)\Vert\leq f^{ss}_M<T_M\quad \forall t\geq 0; \label{ssbounds}
\end{equation}
and $\inf_{t\geq 0}(m\vert g+\dot{v}_{d_3}\vert)>0$
\item the desired angular velocity is bounded and continuously differentiable, \emph{i.e.}, $\omega_d(\cdot)\in {C}^1\cap L_\infty$.
\end{enumerate}
\end{assumption}

\section{Robust stabilization of the error dynamics}
\label{sec:cont_strat}

In this section we show the design of a control law that ensures the position tracking for any bounded and sufficiently smooth trajectory with no restriction on the initial position error and some restriction on the initial attitude error. Due to the actuation limitations, full decoupling of the position and attitude tracking cannot be achieved, as discussed in the previous section. Specifically, we will show that the system dynamics can be represented by a feedback interconnection in which an attitude planner provides the reference to the attitude subsystem which, in turn, affects the position error dynamics (see Figure \ref{fig:cont_scheme}). 

The attitude planner design plays a central role in our control strategy. This subsystem takes the desired attitude reference and the position errors as inputs and  is in charge of computing a reference attitude and angular velocity that are feasible, in the sense that the actuation constraints are satisfied.  The output of the planner (which is the actual reference to the attitude subsystem) is denoted  as $(R_p,\omega_p)\in T\mathrm{SO}(3)$.

\begin{propty}\label{propty:planner_ref}
The attitude planner motion $\left(R_p,\,\omega_p\right) \in T\mathrm{SO}(3)$ is feasible, in the following sense 
\begin{align}\label{eq:planner_setpoint}
\dot{R}_p(t)&=R_p(t)\hat{\omega}_p(t), \quad \forall t\geq 0\\
\omega_{p}(\cdot)&\in {C}^1\cap L_\infty.
\end{align}
\end{propty}

\begin{figure}
	\begin{centering}
		\includegraphics[scale=0.38]{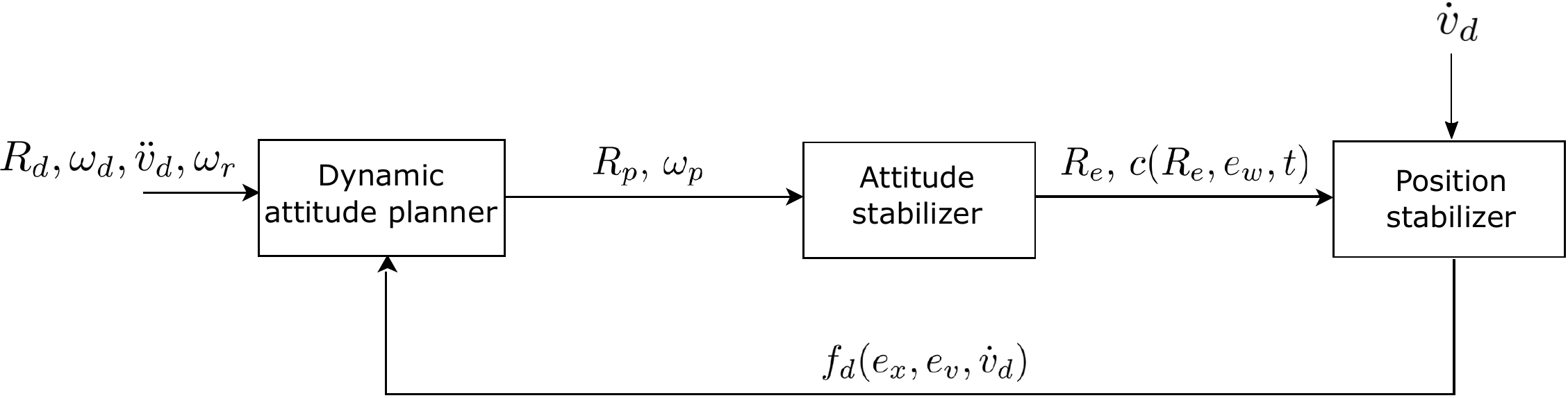}
		\par\end{centering}
	\caption{Overall control scheme comprising the dynamic attitude planner and the error stabilizers.} 
	\label{fig:cont_scheme}
\end{figure}

\subsection{Control force and position error dynamics}
Since the goal of this work is to obtain a control law that prioritizes position over orientation tracking, we start by inspecting the position error dynamics. By focusing on the  dynamics in \eqref{dyn4cont1}-\eqref{dyn4cont4}, the natural choice of the tracking errors for position and velocity is:
\begin{align}
e_{x} := x-x_{d},\quad e_{v} := v-v_{d}.\label{eq:pos_err}
\end{align}
Accordingly, the position and velocity errors are expressed in the inertial frame. The control objective is to stabilize the origin $(e_x,e_v)=(0,0) \in \mathbb{R}^6$.
Consider the equations \eqref{dyn4cont1}-\eqref{dyn4cont3} and the definition of the errors in \eqref{eq:pos_err}. Then,
\begin{align}
\dot{e}_{x}	&=e_{v}\label{pos_err_dyn1}\\
m\dot{e}_v&=m\left(\dot{v}-\dot{v}_d\right)=-m\left(\dot{v}_d+g e_3\right)+Rf_c.
\end{align}
The system would be fully actuated if one could arbitrarily assign $f_c$, and it would be possible to enforce $Rf_c=f_d$, with 
\begin{equation}
f_d (e_x,e_v, \dot{v}_{d}):= \beta\left(e_{x},\, e_{v}\right)+m\left(\dot{v}_{d}+ge_{3}\right),\label{eq:des_force}
\end{equation}
where $\beta:\mathbb{R}^6\rightarrow\mathbb{R}^{3}$ is a static state feedback stabilizer. The corresponding closed-loop dynamics would be described by \eqref{pos_err_dyn1} and 
$m\dot{e}_{v}=\beta\left(e_{x},\, e_{v}\right)$. Thus, the origin
would be globally asymptotically stable. 
In this work we will adopt the same strategy used in \cite{Naldi2017}, corresponding to a specific version of nested saturations-based static nonlinear feedback:
\begin{equation}
\label{eq:betadef}
  \beta\left(e_{x},\, e_{v}\right):= \lambda_2 \mbox{sat}\left(\frac{k_2}{\lambda_2} \left( e_v +\lambda_1\mbox{sat} \left(\frac{k_1}{\lambda_1} e_x \right)\right)\right),
\end{equation}
where $k_1,k_2$ are stabilizing gains and $\lambda_1,\lambda_2$ are suitably chosen saturation levels (see \cite{Naldi2017} for the details). While we develop our theory for this specific stabilizer, we emphasize that similar generalizations to those reported in \cite{Roza2014} are possible, thus allowing for stabilizing laws inducing improved transients.

To account for the fact that it is not possible to obtain $Rf_c=f_d$ when the control force cannot span $\mathbb{R}^3$, it is convenient to rewrite the velocity error dynamics as:
\begin{align}
m\dot{e}_{v}&=-m\left(\dot{v}_d+g e_3\right)+RR_p^T R_pf_c\\
&=-m\left(\dot{v}_d+g e_3\right)+R_e R_pf_c.\label{pos_err_dyn2_1}
\end{align}
where $R_p$ is the reference attitude given by the attitude planner and
\begin{equation}
R_e:=RR_p^T\in\mathrm{SO}(3)\label{eq:att_err}
\end{equation}
is the so-called left attitude error \cite{Bullo1999}. Introducing the corresponding angular velocity error,
\begin{equation}
e_\omega:=\omega-\omega_p,\label{eq:ang_vel_err}
\end{equation}
a natural choice for the control force $f_c$ is
\begin{equation}
f_c:=R_p^T\Phi\left(R_e,\,e_\omega,t\right)f_d,\label{eq:cont_force}
\end{equation}
where $\Phi(R_e,\,e_\omega,t):T\mathrm{SO}(3)\times \mathbb{R}_{\geq 0}\rightarrow \mathbb{R}^{3\times 3}$ is a design function, selected later, ensuring that $\Phi(R_e,e_\omega,t)\rightarrow I_3$ for $(R_e,e_\omega)\rightarrow (I_3,0)$ and $\forall t\geq 0$. Indeed, by adding and subtracting $f_d$ in equation \eqref{pos_err_dyn2_1}, the closed-loop velocity error dynamics reads:
\begin{equation}
m\dot{e}_{v}	=\beta\left(e_{x},\, e_{v}\right)+\Delta  R\left(R_{e},e_{\omega},t\right)f_d (e_x,e_v, \dot{v}_{d})\label{pos_err_dyn2}\\
\end{equation}
where
\begin{equation}
\Delta R(R_e,\,e_\omega,t):= R_e \Phi\left(R_e,\,e_\omega,t\right)-I_3.\label{eq:att_mismatch}
\end{equation} 
Written in this form, it is clear that the position error is clearly affected by the attitude error through the term $\Delta R f_d$, which is the mismatch between the desired force $f_d$ and the actual control force resolved in the inertial frame, \emph{i.e.}, $R f_c$. 

\begin{remark}\label{remark:cont_force}
	The control force is obtained by scaling the desired force $f_d$ by a suitable term $\Phi\left(R_e,\,e_\omega,t\right)$ dependent on the attitude error, and then by applying a rotation $R_p^T$ to the resulting vector.
	The idea behind the proposed control law is twofold. When the attitude error is large, the scaling term $\Phi\left(R_e,\,e_\omega,t\right)$ can be used to reduce the control force and, correspondingly, the overshoot in the position tracking is limited and the transient behavior can be improved. The second point is related to the fact that the desired attitude may be such that the constraints on $f_c$ expressed by equations \eqref{eq:coplanar_const}-\eqref{eq:tilt_const} are still not respected with the control force defined in equation \eqref{eq:cont_force}. However, by prioritizing position over orientation tracking one can consider the planner rotation matrix $R_p$ as an additional degree of freedom to ensure that the scaled control force $\Phi\left(R_e,\,e_\omega,t\right)f_d$ is  rotated to be compliant with the actuation constraint.  
\end{remark}

The rationale behind the proposed control law is that if the attitude error dynamics can be made asymptotically stable, then, for $t\rightarrow\infty$, $R_{e}(t)\rightarrow I_3$, $\Phi\left(R_e(t),\,e_\omega(t),t\right)\rightarrow I_3$ and the control force delivered in the inertial frame $R(t)f_{c}(t)=R_e(t)\Phi(t) f_d(t)\rightarrow f_d(t)$, which is the force in the inertial frame required to track the desired position trajectory. Indeed, in this case, the mismatch between the desired and actual control force converges to zero as well, namely $\Delta R (t)f_d(t)\rightarrow 0$.
However, to prove this idea, we have to study the stability of the complete system, including the attitude error dynamics. In particular, we have to make assumptions on the attitude error dependent matrix $\Phi$ so that the position error dynamics has certain desired properties.
First of all, the choice of $\Phi$ must be such that the following property holds true for the attitude mismatch $\Delta R$. 
\begin{propty}\label{propty:van_pert}
\emph{(Vanishing perturbations)}. Consider $\Delta  R$ defined in equation \eqref{eq:att_mismatch}. Given $V_a(R_e,e_\omega):=\sqrt{\Vert e_\omega \Vert^2+\Psi^2(R_e)}$,  where $\Psi(R_e)$ is defined in Section \ref{sec:notation}, there exists a bounded class-$K$  function $\gamma:\mathbb{R}_{\geq 0}\rightarrow \mathbb{R}_{\geq 0}$, satisfying $\forall \left(R_e,\,e_\omega,t\right)\in T\mathrm{SO}(3)\times\mathbb{R}_{\geq 0}$: 
\begin{equation}
\Vert \Delta  R(R_e,e_\omega,t)\Vert \leq \gamma\left(V_a\left(R_e,\,e_\omega\right)\right).
\label{eq:vanishing}
\end{equation}
\end{propty}

We will now show a possible selection of $\Phi$ such that the  above property is verified. The expression  proposed below is only one possible choice that we propose as a starting one. A wide range of performance-oriented alternative choices are possible as long as they satisfy Property \ref{propty:van_pert}, which is needed to ensure that, at convergence, the magnitude of the delivered control force $f_c$ converges to the nominal force $f_{c}^{ss}$ \eqref{eq:feed_force}. It is straightforward to employ a scaling transformation, dependent on the attitude error alone, as follows:
\begin{equation}
\Phi(R_e,\,e_\omega,t):=c(R_e,t)I_3, \label{eq:scaling_fact}
\end{equation}
where $c:\mathrm{SO}(3)\times \mathbb{R}_{\geq 0}\rightarrow \mathbb{R}$ is a properly selected function.
 The next proposition, whose proof is given in Appendix \ref{proof_scaling_funct},
gives an example of such scaling function, which is naturally written in terms of the angle $\theta_e$ between the desired direction $b_{p_3}:=R_p e_3$ and the vertical body axis $b_3:=R e_3$:
\begin{equation}\label{eq:theta_e}
\theta_e(R_e,R_p):=\arccos(b_{p_3}^T b_3 )=\arccos(e_3^T R_p^T R_e R_p e_3).
\end{equation}

\begin{prop}\label{prop:scaling_funct}
	Given $c(R_e,t):=\frac{\ell-(1-\cos(\theta_e(R_e,R_p(t))))}{\ell}$, where $\theta_e$ is defined in \eqref{eq:theta_e}, then, for $\Phi(R_e,e_\omega,t):=c(R_e,t)I_3$ and  $\ell>2$, Property \ref{propty:van_pert} is satisfied.
\end{prop}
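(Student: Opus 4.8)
The plan is to exploit the special structure $\Phi=c(R_e,t)I_3$ from \eqref{eq:scaling_fact}, so that the attitude mismatch in \eqref{eq:att_mismatch} collapses to $\Delta R = c(R_e,t)R_e-I_3$, a quantity depending on $R_e$ alone (and on $t$ only through the bounded planner output $R_p(t)$). Since $R_e\in\mathrm{SO}(3)$, its Frobenius norm can be computed in closed form: starting from $\|\Delta R\|_F^2=\mathrm{tr}\bigl((cR_e-I_3)(cR_e-I_3)^T\bigr)$, using $R_eR_e^T=I_3$ and the identity $\mathrm{tr}(R_e)=3-4\Psi^2(R_e)$ (which follows at once from $\Psi^2(R_e)=\tfrac14\mathrm{tr}(I_3-R_e)$), I would arrive at the exact expression
\[
\|\Delta R(R_e,t)\|_F^2 = 3\bigl(c(R_e,t)-1\bigr)^2 + 8\,c(R_e,t)\,\Psi^2(R_e).
\]
This isolates the two mechanisms through which the mismatch vanishes as $R_e\to I_3$: the factor $\Psi^2(R_e)$ in the second term, and the deviation $c-1$ in the first. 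I would work with the Frobenius norm throughout, noting that it dominates the induced $2$-norm, so that the resulting bound is valid regardless of which matrix norm is meant in \eqref{eq:vanishing}.

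The crux is to show that $(c-1)^2$ is itself of order $\Psi^4(R_e)$, so that the first term does not spoil the decay as $\Psi(R_e)\to 0$; a cruder bound $(c-1)^2\le 4/\ell^2$ would leave an $O(1)$ residual and destroy the class-$\mathcal{K}$ property. From the definition of $c$ one has $c(R_e,t)-1=-\tfrac1\ell\bigl(1-\cos\theta_e\bigr)$ with $\theta_e$ given by \eqref{eq:theta_e}, so the key step is the geometric inequality $1-\cos\theta_e\le 2\Psi^2(R_e)$. I would establish this through the Rodrigues form $R_e=R_n(\theta)$, where $\theta$ is the principal rotation angle and $n\in\mathbb{S}^2$ the axis: evaluating $1-\cos\theta_e=1-b_{p_3}^T R_e b_{p_3}$ on the unit vector $b_{p_3}=R_pe_3$ yields $1-\cos\theta_e=(1-\cos\theta)\bigl(1-(n^Tb_{p_3})^2\bigr)$, whence, using $0\le 1-(n^Tb_{p_3})^2\le 1$ and $1-\cos\theta=2\Psi^2(R_e)$, the claim follows. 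This is the main obstacle, since it requires relating the angle $\theta_e$ attached to the single direction $b_{p_3}$ to the principal angle $\theta$ of $R_e$ encoded in $\Psi$.

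With this inequality the remainder is algebra. Invoking $\ell>2$ to guarantee $c(R_e,t)\in(0,1]$ (so that $c\le 1$ in the second term), combining $(c-1)^2\le\tfrac{4}{\ell^2}\Psi^4(R_e)$ with the elementary bound $\Psi(R_e)\le 1$ (hence $\Psi^4\le\Psi^2$), I would reduce the exact identity to $\|\Delta R\|_F^2\le\bigl(8+\tfrac{12}{\ell^2}\bigr)\Psi^2(R_e)$, that is $\|\Delta R\|_F\le C\,\Psi(R_e)$ with $C:=\sqrt{8+12/\ell^2}$. Finally, since $\Psi(R_e)\le V_a(R_e,e_\omega)$ by definition of $V_a$ and also $\Psi(R_e)\le 1$, I would conclude $\|\Delta R\|_F\le C\min\{1,V_a(R_e,e_\omega)\}$, which is majorized by the bounded class-$\mathcal{K}$ function $\gamma(s):=\tfrac{2Cs}{1+s}$; indeed one checks $\gamma(s)\ge C\min\{1,s\}$ for all $s\ge 0$ (the two cases $s\le1$ and $s>1$ each reduce to $s\le1$ and $s\ge1$, respectively). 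This verifies \eqref{eq:vanishing} and hence Property \ref{propty:van_pert}.
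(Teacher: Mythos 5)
Your proposal is correct and follows essentially the same route as the paper: the same exact identity $\Vert \Delta R\Vert_F^2=3(c-1)^2+8c\,\Psi^2(R_e)$ obtained from $\mathrm{tr}(R_e)=3-4\Psi^2(R_e)$, the same use of $c\in(0,1]$ and $\Psi^4\leq\Psi^2$, and the same final constant $\sqrt{8+12/\ell^2}$. The only (immaterial) differences are that you derive the key inequality $1-\cos\theta_e\leq 2\Psi^2(R_e)$ directly from the angle--axis form rather than invoking the navigation-function bound \eqref{eq:nav_funct} with $K=e_3e_3^T$, and that you make explicit the final step of majorizing $C\min\{1,V_a\}$ by a bounded class-$K$ function, which the paper leaves implicit.
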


\subsection{Control torque and attitude error dynamics}

The attitude controller has to ensure the convergence of the attitude tracking errors according to the fully actuated rotational dynamics in equations \eqref{dyn4cont2}, \eqref{dyn4cont4}. In this work, we will avoid the use parametrization and we will resort to a geometric approach. In particular, by using matrix multiplication as the group operation, the  attitude error $R_e=RR_p^T$, is employed as the attitude error measure in $\mathrm{SO}(3)$, which was already introduced in equation \eqref{eq:att_err}. The error kinematics can be derived from \eqref{dyn4cont2} and \eqref{eq:planner_setpoint} as  follows:
\begin{equation}\label{eq:Re_dot}
\dot{R}_{e}=\dot{R}R_p^T+R\dot{R}_p^{T}=R_{e}R_p\hat{e}_{\omega}R_p^T.
\end{equation}
Consider the system defined in \eqref{dyn4cont2}, \eqref{dyn4cont4} and the control law
\begin{equation}
\tau_{c}  := -R_p^Te_{R}-K_{\omega}e_{\omega}+J\dot{\omega}_p+ \hat{\omega}_p J\omega,\label{eq:cont_torque}
\end{equation}
where $K_\omega\in\mathbb{R}^{3\times 3}$ is symmetric positive definite and $e_R:=\mbox{skew}\left(K_{R}R_{e}\right)^{\lor}$ is the left-trivialized derivative of the modified trace function $\Psi_{K_R}$ introduced in \eqref{eq:nav_funct}, where $K_R\in\mathbb{R}^{3\times3}$ is a symmetric matrix satisfying 
\begin{equation}\label{eq:KR}
 tr(K_R)I_3-K_R\in\mathbb{R}^{3\times 3}_{>0}.
\end{equation}
In particular, using $\Psi_{K_R}(R_e)=\frac{1}{2}\mbox{tr}(K_R(I-R_e))$, $e_R\in\mathbb{R}^3$ satisfies \cite{Bullo1999}:
\begin{equation}\label{eq:e_R}
\dot{\Psi}_{K_R}(R_e)=-\frac{1}{2}\mbox{tr}(K_R R_e)=e_R^T R_p e_\omega.
\end{equation}
Using \eqref{eq:Re_dot} and combining the control torque \eqref{eq:cont_torque} with the rotational equations of motion \eqref{dyn4cont2}, \eqref{dyn4cont4}, the dynamics of the errors \eqref{eq:att_err} and \eqref{eq:ang_vel_err} evolves on $\mathrm{SO}(3)\times \mathbb{R}^3$ as:
\begin{align}
\dot{R}_{e}	&=R_{e}{R_p}\hat{e}_{\omega}R_p^T\label{att_err_dyn1}\\
J\dot{e}_{\omega}	&=-R_p^{T}e_{R}-K_{\omega}e_{\omega}-\hat{e}_{\omega}Je_{\omega}-\hat{e}_{\omega}J\omega_p.\label{att_err_dyn2}
\end{align}
The control torque \eqref{eq:cont_torque}, first proposed by \cite{Bullo1999},
has a simpler expression than the one based on the right group error considered in \cite{Leeetal2010} and no cancellation of non-harmful nonlinearities occurs.
The equilibrium points for the attitude subsystem are the points where the differential of $\Psi_{K_R}$ and the angular velocity error vanish, namely:
\begin{equation}
\left\{\begin{array}{cc}
e_R=0\\
e_\omega=0.
\end{array}\right. 
\end{equation}
The set  of equilibria contains the desired equilibrium $(R_e,e_\omega)=(I_3,0)$ and additional undesired configurations corresponding to the other critical points of $\Psi_{K_R}$. This is intrinsic to the structure of $\mathrm{SO}(3)$ and, as a consequence, no time invariant continuous control law can globally stabilize the identity element. Nonetheless, by defining the scalar (positive from \eqref{eq:KR}) :
\begin{equation}
\ell_R:=\lambda_m\left(\mbox{tr}(K_R)I_3-K_R\right)>0,\label{eq:levelset_const}
\end{equation}
it is well known that in the sublevel set 
\begin{align}
\mathcal{S}_R&:=\left\{R_e\in \mathrm{SO}(3):\,\Psi_{K_R}\left(R_e\right)<\ell_R\right\},\label{eq:psi_sublevel}
\end{align}
the point $R_{e}=I_3$ is the unique critical point and minimum of $\Psi_{K_R}$.
Next, the total error energy function
\begin{equation}
V_R\left(R_e,\,e_\omega\right):=\frac{1}{2}e_\omega^T J e_\omega+\Psi_{K_R}\left(R_e\right),\label{eq:V_R}
\end{equation}
will be used in the stability analysis. It can be shown that this function is a Lyapunov candidate for the attitude error dynamics, \emph{i.e.}, it is positive definite about $\left(R_e,\,e_\omega\right)=\left(I_3,\,0\right)$, continuously differentiable and radially unbounded in the direction of $\Vert e_\omega \Vert\rightarrow \infty$. The following theorem,  the proof of which is reported in Section \ref{sec:proof} to avoid breaking the flows of the exposition, establishes desirable properties of the attitude stabilizer.

\begin{theorem}\label{thm:att_asympt}
Consider the system described by \eqref{att_err_dyn1}-\eqref{att_err_dyn2} and a reference attitude  $t\mapsto(R_p(t),\,\omega_p(t))\in T\mathrm{SO}(3)$
satisfying Property~\ref{propty:planner_ref}.
For any symmetric matrix $K_{R}\in\mathbb{R}^{3\times 3}$ satisfying \eqref{eq:KR} and any symmetric matrix $K_{\omega}\in\mathbb{R}^{3\times 3}_{>0}$, the equilibrium point $\left(R_e,\,e_\omega\right)=\left(I_3,\,0\right)$ is uniformly asymptotically stable with basin of attraction containing the set
\begin{equation}\label{eq:sublevel_att}
 S_a:=\left\{\left(R_e,\,e_\omega\right)\in T\mathrm{SO}(3):\,V_R\left(R_e,\,e_\omega\right)< \ell_R\right\}
\end{equation} 
where $\ell_R$ and $V_R$ are defined in \eqref{eq:levelset_const} and \eqref{eq:V_R}.
\end{theorem}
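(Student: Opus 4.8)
The plan is to use the total error energy $V_R$ of \eqref{eq:V_R} as a (time-independent) Lyapunov function for the non-autonomous error system \eqref{att_err_dyn1}--\eqref{att_err_dyn2}. First I would differentiate $V_R$ along the trajectories: using \eqref{eq:e_R} for the potential part, namely $\dot\Psi_{K_R}(R_e)=e_R^T R_p e_\omega$, and substituting the closed-loop dynamics \eqref{att_err_dyn2} into $e_\omega^T J\dot e_\omega$, the cross term $-e_\omega^T R_p^T e_R$ cancels exactly against $\dot\Psi_{K_R}$, while the two gyroscopic terms $e_\omega^T\hat e_\omega Je_\omega$ and $e_\omega^T\hat e_\omega J\omega_p$ vanish because $e_\omega^T\hat e_\omega=0$ (the row vector $\hat e_\omega^T e_\omega=-e_\omega\times e_\omega=0$). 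This leaves the clean dissipation estimate $\dot V_R=-e_\omega^T K_\omega e_\omega\le 0$, which is negative semidefinite and vanishes exactly on the set $\{e_\omega=0\}$.

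Next I would establish invariance and uniform stability on $S_a$. Since $\dot V_R\le 0$, every sublevel set $\{V_R< c\}$ is forward invariant; taking $c=\ell_R$ shows that $S_a$ in \eqref{eq:sublevel_att} is forward invariant and that, along any trajectory starting in $S_a$, the bound $\Psi_{K_R}(R_e)\le V_R<\ell_R$ keeps $R_e$ inside the sublevel set $\mathcal{S}_R$ of \eqref{eq:psi_sublevel}, where, by \eqref{eq:levelset_const}, $I_3$ is the unique critical point and minimum of $\Psi_{K_R}$. This confinement rules out the undesired equilibria and, together with positive definiteness of $J$ and compactness of $\mathrm{SO}(3)$, yields boundedness of $(R_e,e_\omega)$. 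Because $V_R$ does not depend explicitly on time, is positive definite about $(I_3,0)$, and satisfies $\dot V_R\le 0$, uniform stability of the equilibrium follows immediately.

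The main obstacle is attractivity: $\dot V_R$ is only negative \emph{semi}definite, and since the vector field in \eqref{att_err_dyn1}--\eqref{att_err_dyn2} depends explicitly on $t$ through $(R_p(t),\omega_p(t))$, the LaSalle invariance principle cannot be invoked directly. I would close this gap with a (nested) Matrosov argument, choosing the auxiliary function $V_2:=e_\omega^T R_p^T e_R$. On the set $\{e_\omega=0\}$ where $\dot V_R$ vanishes, equation \eqref{att_err_dyn2} reduces to $J\dot e_\omega=-R_p^T e_R$, so that $\dot V_2\big|_{e_\omega=0}=\dot e_\omega^T R_p^T e_R=-e_R^T R_p J^{-1}R_p^T e_R$, which is negative definite in $e_R$ since $J^{-1}\succ 0$ and $R_p\in\mathrm{SO}(3)$. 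Hence $\dot V_2$ is sign-definite in the residual variable $e_R$ precisely where the first dissipation rate is zero, and because inside $\mathcal{S}_R$ one has $e_R=0\iff R_e=I_3$, Matrosov's theorem delivers uniform asymptotic stability with basin of attraction containing $S_a$.

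It remains to check the regularity hypotheses of Matrosov's theorem, which I expect to be routine. Property~\ref{propty:planner_ref} guarantees $\omega_p\in {C}^1\cap L_\infty$, and on the forward-invariant set $S_a$ the quantities $e_\omega$, $R_e\in\mathrm{SO}(3)$ and $e_R=\mbox{skew}(K_R R_e)^\vee$ are all bounded, so $V_2$ and $\dot V_2$ are uniformly bounded; crucially, $\dot V_2$ involves $\omega_p$ only through $\dot R_p=R_p\hat\omega_p$ and not $\dot\omega_p$, so the $C^1\cap L_\infty$ assumption on $\omega_p$ is sufficient. I anticipate that the only genuinely nontrivial bookkeeping will be tracking the cancellations in $\dot V_R$ and verifying that $V_2$ satisfies the Matrosov boundedness and sign conditions uniformly in $t$.
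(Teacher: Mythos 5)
Your proof is correct, but it takes a genuinely different route from the paper. The common core is identical: both arguments use $V_R$ from \eqref{eq:V_R}, obtain $\dot V_R=-e_\omega^T K_\omega e_\omega\le 0$ via the cancellation through \eqref{eq:e_R} and the vanishing of the gyroscopic terms, and both exploit the same structural fact that on $\{e_\omega=0\}\setminus\{(I_3,0)\}$ the dynamics reduce to $J\dot e_\omega=-R_p^T e_R\neq 0$ inside $\mathcal{S}_R$, so the residual set contains no nontrivial limiting behaviour. Where you differ is in how that fact is converted into attractivity for a non-autonomous system: you invoke a nested Matrosov theorem with the auxiliary function $V_2=e_\omega^T R_p^T e_R$, whose derivative on $\{e_\omega=0\}$ equals $-e_R^T R_p J^{-1}R_p^T e_R<0$ for $e_R\neq 0$; the paper instead embeds the time-varying closed loop into the time-invariant constrained differential inclusion \eqref{eq:diff_incl}, intersects with the tangent cone (Lemma~\ref{flow_intersect}), and applies an invariance principle for differential inclusions (Proposition~\ref{propLaSalle}), showing no complete solution keeps $V_R$ constant and nonzero. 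Your route is more elementary and self-contained (no set-valued analysis, and you correctly note that $\dot V_2$ never needs $\dot\omega_p$, so Property~\ref{propty:planner_ref} suffices); the small bookkeeping items left implicit — restricting the Matrosov machinery to the forward-invariant bounded set $S_a$ rather than using a global version, and writing the cross terms of $\dot V_2$ as a bound of the form $-\lambda_m(J^{-1})\Vert e_R\Vert^2+c\Vert e_\omega\Vert$ so the nested sign conditions hold uniformly in $t$ — are routine. What the paper's heavier formulation buys is downstream: the differential-inclusion representation \eqref{eq:comp_dyna} and the regularity of $F_a$ are reused verbatim in Theorem~\ref{thm:comp_stab} to get \emph{robust} $KL$ asymptotic stability of the cascade via reduction theorems and \cite[Thm 7.21]{Goebel2012}, so if you adopted the Matrosov route you would still need an extra step (or an appeal to the robustness of UAS of compact attractors under the same regularity) to feed the cascade argument.
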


\begin{remark}
As an alternative choice to \eqref{eq:cont_torque}, the control law
$\tilde{\tau}_{c}  = \hat{\omega} J\omega+J\left(\dot{\omega}_p-\hat{\omega}_p\omega-R_p^T(e_R+K_\omega R_p e_\omega)\right)$
substituted in \eqref{dyn4cont2}, \eqref{dyn4cont4} provides the simpler autonomous closed-loop:
\begin{align}
\dot{R}_e&=R_e \hat{\tilde{e}}_\omega\\
\dot{\tilde{e}}_\omega&=-e_R-K_\omega \tilde{e}_\omega,
\end{align}
where $\tilde{e}_\omega:=R_p(\omega-\omega_p)$. Due to the simpler expression, the attitude planner is independent of the attitude error dynamics and the control scheme is simplified. However, this feedback law cancels non-harmful nonlinearities, at the expense of a larger actuation effort and is therefore less desirable.
\end{remark}

\begin{remark}\label{remark:almost_all}
By choosing $K_R=k_R I_3$, $k_r\in\mathbb{R}_{>0}$, which satisfies \eqref{eq:levelset_const} with $\ell_R=2k_R$, the sublevel set of $\psi_K$ in \eqref{eq:psi_sublevel} contains all the rotations with an angle strictly less than $180^{\circ}$, namely almost all the configurations in $\mathrm{SO}(3)$. Furthermore, by increasing the scalar gain $k_R$, also the set of initial conditions $(R_e(0),e_\omega(0))$  included in \eqref{eq:sublevel_att} can be arbitrarily enlarged. This yields almost-semi global exponential stability \cite{Lee2015c}. Of course, large values for $k_R$ may result in undesired transient performance (overshoot) and a corresponding control torque that cannot be handled by the actuators in practical applications. 
Robust global asymptotic stability of the desired equilibrium $(I_3,0)$ can be obtained by using a hybrid controller on $\mathrm{SO}(3)$ \cite{Mayhew2011}.
\end{remark}

\subsection{Complete dynamics\label{sec:contr}}

This section presents the main results of the stability analysis for the complete system. 
Our proof is based on a compact representation of the closed loop wherein the solutions of the time-varying dynamics is embedded into a time-invariant differential inclusion, in ways that are similar to the strategy in \cite{Naldi2017}, even though the approach adopted here does not require the (somewhat stringent) assumption $\dot \omega_p$ be bounded (this assumption becomes necessary when following the approach in \cite{Naldi2017}). 
%
By introducing $x_a:=(R_e,e_\omega)\in T\mathrm{SO}(3)$ and $x_p:=(e_x,e_x)\in\mathbb{R}^6$,
the solutions to
the attitude error dynamics \eqref{att_err_dyn1}-\eqref{att_err_dyn2} and the position error dynamics 
\eqref{pos_err_dyn1}, \eqref{pos_err_dyn2} can be embedded within the solution funnel generated by the following 
 constrained differential inclusion:
\begin{align}\label{eq:comp_dyna}
(A)\quad &
\dot{x}_a\in F_a(x_a), && x_a\in T\mathrm{SO}(3)\\
\label{eq:comp_dynb}
(P)\quad & \dot{x}_p \in F_p\left(x_p,x_a\right),  && x_p\in\mathbb{R}^6,
\end{align}
where we used a slight abuse of notation\footnote{
	To be consistent with the formulation, the differential inclusion should be written by exploiting the vectorization, $\text{vec}(\dot{R_e})\in \text{vec}\left(F_R\left(R_e,\,e_\omega\right)\right)$}  and $F_a(x_a)$, $F_p(x_p)$ is defined as
\begin{align}
&F_a(x_a):=\label{eq:F_a}\\
&\underset{\tiny{\begin{array}{c}
		R_p\in\overline{\mbox{co}}\left(\mathrm{SO}\left(3\right)\right)\\
		\left\Vert \omega_p\right\Vert \leq\omega_{M}
		\end{array}}}{\bigcup}\left[\begin{array}{c}
R_{e}R_p\hat{e}_{\omega}R_p^T\\
-J^{-1}\left(R_p^{T}e_{R}+K_{\omega}e_{\omega}+\hat{e}_{\omega}Je_{\omega}+\hat{e}_{\omega}J\omega_p\right)
\end{array}\right],\nonumber\\
&F_p(x_p,x_a):=
\bigmat{e_v\\\frac{1}{m}\left(\beta(e_{x},\, e_{v})+ f_M\gamma(V_a(R_e,e_\omega)) \overline{\mathcal{B}_{3}}\;\right)}
\end{align}
where $\overline{\mbox{co}}(\cdot)$ denotes the closed convex hull, $\overline{\mathcal{B}_{3}}$ denotes the closed unit ball and $\omega_M\in\mathbb{R}_{>0}$ is a constant the existence of which is ensured by Assumption \ref{assum_setpoint}.
Moreover, function $\gamma$ comes from \eqref{eq:vanishing}, and scalar \begin{equation}\label{eq:f_d_max}
f_M:=\sqrt{3}\lambda_2 + f_M^{ss}
\end{equation}
is a bound on the  term $f_d$ arising from substituting \eqref{eq:feed_force} and \eqref{ssbounds} into \eqref{eq:des_force}.

Based on representation \eqref{eq:comp_dyna}, \eqref{eq:comp_dynb}, asymptotic tracking for the complete dynamics can be proven, under Assumption \ref{assum_setpoint}, as stated by the following Theorem.

\begin{theorem}\label{thm:comp_stab}
Consider the closed-loop system described by \eqref{pos_err_dyn1}, \eqref{pos_err_dyn2} and \eqref{att_err_dyn1}, \eqref{att_err_dyn2} controlled by \eqref{eq:cont_force}, \eqref{eq:cont_torque} and  the planner output given by $(R_p,\omega_p)=(R_d,\omega_d)\in T\mathrm{SO(3)}$, where the desired trajectory $t\mapsto (R_d(t),\omega_d(t),x_d(t),v_d(t))$ satisfies Assumption \ref{assum_setpoint}. Then, if $\Phi(\cdot,\cdot,\cdot)$ is selected according to Property \ref{propty:van_pert}, for any symmetric matrix $K_R$ satisfying \eqref{eq:KR}, any $K_\omega\in\mathbb{R}^{3\times 3}_{>0}$,
the point
$(R_{e},\,e_\omega,\,e_x,\,e_v)=(I_3,0,0,0)$
is robustly uniformly asymptotically stable with basin of attraction containing the set $ S_a\times \mathbb{R}^6$, where $ S_a$ is defined in \eqref{eq:sublevel_att}.
\end{theorem}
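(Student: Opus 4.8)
The plan is to exploit the cascade structure of the differential inclusion \eqref{eq:comp_dyna}, \eqref{eq:comp_dynb} --- where $F_a$ does not depend on $x_p$ and $F_p$ depends on $x_a$ only through the vanishing term $\gamma(V_a(R_e,e_\omega))$ --- and to conclude through the reduction theorems of \cite{MaggioreZac17}. To this end I would work on the state space $\mathcal{X}:=T\mathrm{SO}(3)\times\mathbb{R}^6$ and introduce the two nested closed sets
\[
\mathcal{A}_1:=\{(I_3,0)\}\times\mathbb{R}^6, \qquad \mathcal{A}_2:=\{(I_3,0,0,0)\}\subset\mathcal{A}_1,
\]
then prove asymptotic stability of $\mathcal{A}_2$ relative to $\mathcal{X}$ by the two relative-stability ingredients required by the reduction principle: $\mathcal{A}_1$ asymptotically stable relative to $\mathcal{X}$, and $\mathcal{A}_2$ asymptotically stable relative to the flow restricted to $\mathcal{A}_1$.

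For the first ingredient, since $\dot{x}_a\in F_a(x_a)$ is autonomous and completely decoupled from $x_p$, Theorem~\ref{thm:att_asympt} directly yields uniform asymptotic stability of $(I_3,0)$ for $F_a$ with basin containing $S_a$; lifting this to $\mathcal{X}$, with the $x_p$ coordinate as a free spectator, shows that $\mathcal{A}_1$ is uniformly asymptotically stable relative to $\mathcal{X}$ with basin of attraction containing $S_a\times\mathbb{R}^6$. For the second ingredient, observe that on $\mathcal{A}_1$ we have $R_e=I_3$ and $e_\omega=0$, hence $V_a=0$ and, since $\gamma$ is a class-$K$ function (Property~\ref{propty:van_pert}), $\gamma(V_a)=0$; the perturbation term in $F_p$ therefore vanishes and the restricted position dynamics collapses to the nominal chain $\dot{e}_x=e_v$, $m\dot{e}_v=\beta(e_x,e_v)$. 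By the global asymptotic stability of the nested-saturation stabilizer $\beta$ borrowed from \cite{Naldi2017}, the origin is globally asymptotically stable for this reduced flow, i.e. $\mathcal{A}_2$ is asymptotically stable relative to $\mathcal{A}_1$.

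The main obstacle is the third hypothesis of the reduction principle, namely regularity of the inclusion together with a boundedness/forward-completeness property of the solutions issued from $S_a\times\mathbb{R}^6$, which is also what must compensate for the non-compactness of $\mathcal{A}_1$. Regularity (outer semicontinuity, local boundedness, nonempty compact convex values) follows because $F_a$ is a union of continuous maps over the compact parameter set $\overline{\mbox{co}}(\mathrm{SO}(3))\times\{\Vert\omega_p\Vert\le\omega_M\}$ and $F_p$ is assembled from the continuous bounded function $\gamma$ and the closed ball. The boundedness property is the delicate point: I would use that $x_a$ evolves on the compact factor $\mathrm{SO}(3)$ with $e_\omega$ bounded along solutions starting in $S_a$ (from the Lyapunov estimate underlying Theorem~\ref{thm:att_asympt}), so the perturbation $f_M\gamma(V_a)$ entering $F_p$ is uniformly bounded by $f_M\sup\gamma$, and then the saturated structure of $\beta$ guarantees, via its \emph{(small-signal) ISS} property with respect to this bounded and asymptotically vanishing matched disturbance, that $e_x,e_v$ cannot escape in finite time and stay bounded on $S_a\times\mathbb{R}^6$. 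I expect reconciling the perturbation bound of Property~\ref{propty:van_pert} with the ISS margin of the saturated feedback on this unbounded set to be the genuine technical hurdle.

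With regularity and boundedness established, the reduction theorem of \cite{MaggioreZac17} upgrades the two relative-stability statements into asymptotic stability of $\mathcal{A}_2$ relative to $\mathcal{X}$, with basin of attraction containing $S_a\times\mathbb{R}^6$. Finally, because the time variation of the actual closed loop has been embedded into the time-invariant inclusion \eqref{eq:comp_dyna}, \eqref{eq:comp_dynb}, whose solution funnel contains every solution of \eqref{pos_err_dyn1}, \eqref{pos_err_dyn2}, \eqref{att_err_dyn1}, \eqref{att_err_dyn2}, this translates into \emph{uniform} asymptotic stability of $(I_3,0,0,0)$ for the original system, while robustness against sufficiently small perturbations is automatic from the regularity of the inclusion by \cite[Chapter 7]{Goebel2012}.
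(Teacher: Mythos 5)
Your proposal is correct and follows essentially the same route as the paper: embedding into the time-invariant inclusion \eqref{eq:comp_dyna}--\eqref{eq:comp_dynb}, reduction theorems from \cite{MaggioreZac17} for stability of the cascade, the small-signal ISS property of the nested-saturation stabilizer from \cite{Naldi2017} for attractivity from $S_a\times\mathbb{R}^6$, and \cite[Thms 7.12 and 7.21]{Goebel2012} for uniformity and robustness. You actually spell out the reduction ingredients and the boundedness issue more explicitly than the paper's proof, which delegates exactly that technical hurdle to the steps of \cite[Proof of Prop.\ 4]{Naldi2017}.
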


\begin{proof}
The cascaded interconnection \eqref{eq:comp_dyna}, \eqref{eq:comp_dynb} comprises the upper subsystem  \eqref{eq:comp_dyna}, whose stability
properties (with domain of attraction $ S_a$)
is established in Theorem~\ref{thm:att_asympt}, and the lower subsystem \eqref{eq:comp_dynb}, which is stabilized by the nested saturation feedback \eqref{eq:betadef} proposed in \cite{Naldi2017}. (Local) stability of the cascade follows from standard reduction theorems for differential inclusions (see, e.g., \cite{MaggioreZac17}) whereas attractivity from 
$ S_a\times \mathbb{R}^6$ can be established using the small signal ISS properties of stabilizer  \eqref{eq:betadef} following the same steps as in \cite[Proof of Prop. 4]{Naldi2017}. Finally, stability and attractivity of the point $(R_{e},\,e_\omega,\,e_x,\,e_v)=(I_3,0,0,0)$ for the closed-loop implies $KL$ asymptotic stability from \cite[Thm 7.12]{Goebel2012}, and then also robust $KL$ asymptotic stability from \cite[Thm 7.21]{Goebel2012}.
\end{proof}

\section{Attitude planning for constraints compliance}\label{sec:att_plan}

The result of the previous section (notably Theorem~\ref{thm:comp_stab}) establishes robust asymptotic stability of the origin for the error dynamics, regardless of the reference  orientation $(R_p,\omega_p)\in T\mathrm{SO(3)}$. Nonetheless, Theorem~\ref{thm:comp_stab} gives no guarantees about the fact that the force $f_c$ requested by the control scheme satisfies the bounds 
characterized in Section~\ref{sec:input_constr}. In this section we propose to select 
$(R_p,\omega_p)$ according to a dynamic attitude planner, 
as represented in Figure~\ref{fig:cont_scheme}. The attitude planner is
in charge of properly changing the desired attitude to prioritize position over orientation tracking, while respecting the input constraints at hand.  
According to Remark \ref{remark:cont_force},
the attitude planner is intended to include a desired decoupling between attitude and position tracking whenever it is possible, \emph{i.e.}, when this would not result in a violation of the actuation constraints.   With respect to the approach of \cite{Franchi2016}, in which the reference attitude is obtained as the solution of an optimization problem, our design allows to naturally compute a differentiable reference that satisfies Property \ref{propty:planner_ref} so that $\tau_c$ in \eqref{eq:cont_torque} is well defined. 



\subsection{Attitude planner dynamics}

There is a natural way to express the reference attitude $R_p$ by noticing that the actuation constraints for all the different configurations are satisfied if the control force is delivered along the positive direction of the $b_3$ axis. Following standard strategies for underactuated UAVs \cite{Leeetal2010}, we introduce a smooth matrix function $R_c(f_d,t):\mathbb{R}^3\setminus\{0\}\times \mathbb{R}_{\geq 0}\rightarrow \mathrm{SO}(3)$ defined as: 
\begin{align}
R_{c} (f_d,t) :=  \left[\begin{array}{ccc}
b_{c_1}(f_d,t) & b_{c_2}(f_d,t) & b_{c_3}(f_d)\end{array}\right],\;
b_{c_3} :=  \frac{f_d}{\bigl\Vert f_d\bigr\Vert}, \label{eq:quad_rot_mat}
\end{align}
where $b_{c_1}(f_d,t)$ and $b_{c_2}(f_d,t)$ are any two orthogonal unit vectors such that $R_c$ defines a rotation matrix. A possible selection is:
\begin{align}\label{eq:bc23}
b_{c_1}  :=  b_{c_2}(f_d,t)\times b_{c_3}(f_d),\quad b_{c_2} :=  \frac{b_{c_3}(f_d)\times b_{d}(t)}{\Vert b_{c_3}(f_d)\times b_{d}(t)\Vert}.
\end{align}
The vector $b_d$ defines the desired heading direction of the UAV: 
\begin{equation}\label{eq:bd}
b_{d}(t):=\bigmat{\cos (\psi_d(t)) &\sin(\psi_d(t)) &0}^T,
\end{equation}
where $t\mapsto \psi_d(t) \in \mathbb{R}$ is the desired yaw angle (which may be extracted from a given $R_d(t)$).   We note that $R_c(f_d,t)$ is well defined as long as $f_d\neq \bigmat{0 &0&0}^T$. Based on $R_c$, we select the reference attitude $R_p$, output of the attitude planner, as:
\begin{equation}
R_p:=R_c(f_d,t)R_r,\label{eq:ref_att_decomp}
\end{equation}
where $R_r\in\mathrm{SO}(3)$ is an additional state of the dynamic attitude planner.
Being an element of $\mathrm{SO}(3)$, the differential equation for the relative attitude $R_r$ can be written as:
\begin{equation}
\dot{R}_r=\hat{\omega}_r R_r, \label{eq:rel_att_dyn}
\end{equation}
where $\omega_r\in\mathbb{R}^3$ is the relative angular velocity, with coordinates in the frame $\mathcal{F}_c$, that will be used as an auxiliary input to control the evolution of $R_r$. For instance, $\omega_r$ can be used to track at best the desired attitude $R_d$ by exploiting a Lyapunov-based design, once a suitable potential function of the desired attitude $R_d$ is provided. Then, the time evolution of $R_r$ can be properly modified to satisfy the actuation constraints. In Section \ref{sec:tilt_case}, an example of this approach is reported for the case of conic actuation limitation (item (\emph{b}) in Section \ref{sec:input_constr}).\\
Finally, the attitude planner has to provide a corresponding velocity reference $\omega_p$, satisfying  $\dot{R}_p=R_p \hat{\omega}_p$, which is computed  by first introducing the angular velocity 
\begin{equation}
\omega_c(R_c,f_d):=(R_c^T \dot{R}_c)^{\vee}\label{eq:quad_ang_vel}
\end{equation}
of the frame $\mathcal{F}_C:=\{b_{c_1},\,b_{c_2},\,b_{c_3}\}$, and then using \eqref{eq:ref_att_decomp} and \eqref{eq:rel_att_dyn} to obtain $\dot{R}_p=\dot{R}_cR_r+R_c\dot{R}_r=R_p R_r^T(\hat{\omega}_c+\hat{\omega}_r ) R_r=R_p\hat{\omega}_p$. In particular, the above relation provides  $\hat{\omega}_p=R_r^T(\hat{\omega}_c+\hat{\omega}_r)R_r$, which, using \eqref{eq:veemap}, gives:
\begin{equation}
\omega_{p}  =  R_{r}^{T}\left(\omega_{c}(R_c,f_d)+\omega_{r}\right).\label{eq:planner_ang_vel}
\end{equation}

As the main goal of the attitude planner is to track at best the desired attitude, it is more convenient to rewrite the dynamics \eqref{eq:rel_att_dyn} in terms of the planner attitude error, \emph{i.e.},
\begin{equation}\label{eq:planner_att_err}
R_e^p:=R_p R_d^T.
\end{equation}
Using then $\dot{R}_d=R_d\hat{\omega}_d$ from Assumption \ref{assum_setpoint} and \eqref{eq:planner_ang_vel}, the overall dynamics of the attitude planner becomes:
\begin{align}
\dot{R}_e^p&=R_e^p R_d(\hat{\omega}_p-\hat{\omega}_d)R_d^T\label{planner_err_dyn}\\
\omega_p&=(R_c^T(f_d,t) R_e^p R_d)^T(\omega_c(R_c,f_d)+\omega_r).\label{eq:planner_ang_vel_2}
\end{align}

\begin{prop}\label{prop:goodR_c}
	By selecting the gain $\lambda_2<\inf_{t\geq 0}\vert m(\dot{v}_{d_3}(t)+g)\vert$ in \eqref{eq:betadef}, the rotation matrix $R_c$ by \eqref{eq:quad_rot_mat}-\eqref{eq:bd}, is well defined. 
\begin{proof}
	The proposition can be demonstrated by inspecting the following inequality:
	\begin{align}\label{eq:des_force_positivity}
	\Vert f_d(t)\Vert\geq \vert f_{d_3}(t)\vert	\geq m\vert \dot{v}_{d_3}(t)+g\vert-\vert \beta_3(e_x,e_v)\vert \\
	 \geq \inf_{t\geq 0}\vert m( \dot{v}_{d_3}(t)+g)\vert-\beta_{3_M}>0,
	\end{align}
	which holds thanks to the equivalence $\beta_{3_M}=\lambda_2$ for the definition of 
	\eqref{eq:betadef} coming from \cite{Naldi2017} and the assumption that $\lambda_2<\inf_{t\geq 0}\vert m( \dot{v}_{d_3}(t)+g)\vert$.
	Then, $b_{c_1}$ and $b_{c_2}$ are unit vectors, orthogonal to each other and with $b_{c_3}$ and well defined $\forall t\geq 0$. Indeed, $b_d(t)$ belongs to the horizontal plane $(i_1,\,i_2)$ by definition \eqref{eq:bd} and the third component of $b_{c_3}$ never vanishes from \eqref{eq:des_force_positivity}. Hence, the cross product $b_{c_3}\times b_{d}$ does not vanish either and so $\Vert b_{c_3}\times b_{d}\Vert\neq 0$ $\forall t\geq 0$ in equation \eqref{eq:bc23}. 
\end{proof}
\end{prop}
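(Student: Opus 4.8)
The plan is to reduce the well-posedness of $R_c$ to two nonvanishing conditions and then establish both of them from the gain selection. Inspecting the construction \eqref{eq:quad_rot_mat}--\eqref{eq:bc23}, the matrix $R_c$ can fail to be well defined only in two circumstances: when $f_d=0$, so that the normalization defining $b_{c_3}=f_d/\Vert f_d\Vert$ breaks down, and when $b_{c_3}$ and $b_d$ are collinear, so that the cross product $b_{c_3}\times b_d$ used to define $b_{c_2}$ vanishes. Ruling out both cases for all $t\geq 0$ suffices, since then $b_{c_1},b_{c_2},b_{c_3}$ form an orthonormal right-handed triad and $R_c\in\mathrm{SO}(3)$ by construction.

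First I would bound the vertical component of the desired force away from zero. From \eqref{eq:des_force} the third component reads $f_{d_3}=\beta_3(e_x,e_v)+m(\dot{v}_{d_3}+g)$, so by the triangle inequality $|f_{d_3}|\geq m|\dot{v}_{d_3}+g|-|\beta_3(e_x,e_v)|$. The crucial structural fact, inherited from the nested-saturation design of \cite{Naldi2017}, is that the outer saturation level controls the magnitude of $\beta$, giving the uniform bound $|\beta_3(e_x,e_v)|\leq\beta_{3_M}=\lambda_2$. Combining this with the standing lower bound $\inf_{t\geq 0} m|\dot{v}_{d_3}+g|>0$ from Assumption~\ref{assum_setpoint} and the hypothesis $\lambda_2<\inf_{t\geq 0}|m(\dot{v}_{d_3}(t)+g)|$ yields
\begin{equation}
|f_{d_3}(t)|\geq \inf_{t\geq 0}|m(\dot{v}_{d_3}(t)+g)|-\lambda_2>0,\qquad\forall t\geq 0.
\end{equation}
In particular $\Vert f_d(t)\Vert\geq |f_{d_3}(t)|>0$, which settles the first condition and makes $b_{c_3}$ well defined with a third component bounded away from zero.

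It then remains to exclude collinearity of $b_{c_3}$ and $b_d$, where I would exploit the geometry directly. By definition \eqref{eq:bd} the heading vector $b_d$ lies in the horizontal plane spanned by $e_1,e_2$, i.e.\ its third component is identically zero, whereas the previous step shows that $b_{c_3}=f_d/\Vert f_d\Vert$ has a third component that never vanishes. Two vectors with these complementary properties can never be parallel, so $b_{c_3}\times b_d\neq 0$ and hence $\Vert b_{c_3}\times b_d\Vert>0$ for all $t\geq 0$; the normalization defining $b_{c_2}$, and then $b_{c_1}=b_{c_2}\times b_{c_3}$, are therefore well posed.

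The only genuinely non-routine step is the uniform bound $|\beta_3|\leq\lambda_2$: everything else is triangle-inequality bookkeeping together with the elementary parallelism argument. I expect the main obstacle to be justifying that the specific nested-saturation stabilizer \eqref{eq:betadef} indeed satisfies $\beta_{3_M}=\lambda_2$, which requires invoking the saturation-level conventions of \cite{Naldi2017} rather than re-deriving them here.
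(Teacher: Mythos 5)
Your argument is correct and follows essentially the same route as the paper's proof: lower-bound $\vert f_{d_3}\vert$ via the triangle inequality and the saturation bound $\vert\beta_3\vert\leq\lambda_2$ from \cite{Naldi2017}, then rule out collinearity of $b_{c_3}$ and $b_d$ because $b_d$ is horizontal while $b_{c_3}$ has a nonvanishing third component. The only difference is expository — you state the two failure modes of the construction up front — but the substance matches the paper exactly.
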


The following lemma is a useful link between the output $\left(R_p,\,\omega_p\right)$ of the attitude planner, and the reference motion satisfying the properties of Assumption \ref{assum_setpoint}.
Its proof is given in Appendix \ref{proof_quad_feas_ref}.


\begin{lem}\label{lem:feas_ref} \emph{(Feasibility conditions of the planner output).} If the relative angular velocity is bounded and continuously differentiable, \emph{i.e.}, $\omega_r\in(L_\infty\cap {C}^1)$, and the desired angular velocity $\omega_d$ satisfies Assumption \ref{assum_setpoint}, then the reference attitude motion $\left(R_p,\,\omega_p\right)\in TSO(3)$, obtained according to equations \eqref{eq:ref_att_decomp}, \eqref{eq:planner_ang_vel}, is feasible, in the sense that it satisfies Property~\ref{propty:planner_ref}.
%
\end{lem}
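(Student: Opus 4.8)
The plan is to verify the two requirements of Property~\ref{propty:planner_ref} separately: the kinematic consistency $\dot R_p(t)=R_p(t)\hat\omega_p(t)$ and the regularity/boundedness $\omega_p\in C^1\cap L_\infty$. The first requirement needs no further work: it is exactly the identity derived in the main text immediately before \eqref{eq:planner_ang_vel}, where differentiating the factorization \eqref{eq:ref_att_decomp} and inserting \eqref{eq:rel_att_dyn} together with the definition \eqref{eq:quad_ang_vel} of $\omega_c$ yields $\dot R_p=R_p\hat\omega_p$ with $\omega_p$ as in \eqref{eq:planner_ang_vel}. Hence the whole proof reduces to showing that the composite signal $\omega_p=R_r^T(\omega_c+\omega_r)$ is continuously differentiable and bounded, and the natural route is to treat the three factors $R_r^T$, $\omega_c$ and $\omega_r$ in turn.

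The contributions of $R_r$ and $\omega_r$ are immediate. By hypothesis $\omega_r\in C^1\cap L_\infty$. Since $R_r$ solves \eqref{eq:rel_att_dyn} with the $C^1$ coefficient $\hat\omega_r$, standard ODE regularity upgrades $R_r$ to class $C^2$, while $R_r\in\mathrm{SO}(3)$ makes $R_r$ (and $R_r^T$) automatically bounded with bounded derivative. Consequently the only nontrivial factor is $\omega_c=(R_c^T\dot R_c)^\vee$, and it suffices to prove $\omega_c\in C^1\cap L_\infty$; closure of $C^1\cap L_\infty$ under sums and products of bounded $C^1$ maps then delivers $\omega_p\in C^1\cap L_\infty$.

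The core step is therefore the regularity and boundedness of $\omega_c$, which I would obtain from a compactness argument on $R_c$. First I would confine the argument $f_d$ of $R_c$ to a compact subset of $\mathbb{R}^3\setminus\{0\}$: boundedness from above follows from \eqref{eq:f_d_max} (each saturation component of $\beta$ is bounded by $\lambda_2$ and $\Vert f_c^{ss}\Vert\le f_M^{ss}$), and boundedness away from zero follows from the chain of inequalities \eqref{eq:des_force_positivity} used in Proposition~\ref{prop:goodR_c}, whose gain condition I would invoke to guarantee that $R_c$ in \eqref{eq:quad_rot_mat}--\eqref{eq:bd} is well defined, including $\Vert b_{c_3}\times b_d\Vert$ bounded away from zero (because $b_d$ from \eqref{eq:bd} is horizontal while the vertical component of $b_{c_3}$ never vanishes). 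On this compact set $R_c$ and its partial derivatives are smooth, so $R_c(f_d(t),t)$ inherits the regularity of the signals $f_d(t)$ and $\psi_d(t)$. Here $\psi_d\in C^2$ with bounded derivative follows from $\omega_d\in C^1\cap L_\infty$ (Assumption~\ref{assum_setpoint}), which makes $R_d\in C^2$ and hence $b_d\in C^2$ bounded; and $f_d=\beta(e_x,e_v)+m(\dot v_d+ge_3)\in C^2$ with bounded $\dot f_d$ follows from $x_d\in C^4$ (so $\dot v_d=\ddot x_d\in C^2$, $\ddot v_d\in L_\infty$) together with the smoothness of the feedback $\beta$ and a short bootstrap on the error equation \eqref{att_err_dyn2}. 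Chaining these, $R_c$ is $C^2$ with bounded $R_c,\dot R_c$, so $\omega_c=(R_c^T\dot R_c)^\vee\in C^1\cap L_\infty$, and the conclusion follows.

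The main obstacle I anticipate is precisely the claim that $f_d$ is of class $C^2$ with bounded derivative. Because $f_d$ is fed by the closed-loop position and velocity errors through the nested-saturation stabilizer $\beta$ of \eqref{eq:betadef}, this needs (i) enough smoothness of the saturation, so that $\beta\in C^2$ as in the construction of \cite{Naldi2017}, and (ii) a boundedness argument for $\dot\beta$ that is not purely cosmetic: a naive estimate only gives $\dot\beta=O(\Vert e_v\Vert)$, and one must exploit the structure of the nested saturation --- namely that the derivative of the outer saturation vanishes (or decays) exactly on the region where $e_v$ is large --- to recover a uniform bound on $\dot\beta$ and hence on $\dot f_d$. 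Once this regularity of $f_d$ is in hand, every remaining step is a routine application of ODE regularity, the smoothness of $R_c$ on the relevant compact set, and the algebraic closure properties of $C^1\cap L_\infty$.
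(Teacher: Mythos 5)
Your proposal is correct and follows essentially the same route as the paper's proof: reduce feasibility to boundedness of $\omega_c$, use the upper and lower bounds on $f_d$ and the non-vanishing of $b_{c_3}\times b_d$ from Proposition~\ref{prop:goodR_c}, and control $\dot f_d$ by exploiting that $\nabla_{e_x}\beta$ vanishes where $e_v$ is large together with the bound on $\dot e_v$ from the closed-loop velocity error dynamics (which is \eqref{pos_err_dyn2}, not \eqref{att_err_dyn2} as you cite). The paper simply carries out your ``smooth dependence on a compact set'' step explicitly, via Poisson's formula and triangle-inequality bounds on $\dot b_{c_1},\dot b_{c_2},\dot b_{c_3}$, so the content is the same.
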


\begin{remark}
  The decomposition of the reference attitude in equation \eqref{eq:ref_att_decomp} allows to effectively account for the actuation constraints of the different configurations. The angular velocity $\omega_r$ of the relative rotation matrix is an additional degree of freedom that can be exploited when the thrust-vectoring capability is  not locked, \emph{i.e.}, the control force can be produced in a region around the vertical axis. This additional input can be exploited to track the desired attitude at best while taking into account the constraints. Indeed, it is always possible to select an initial condition $R_r(0)$ such that the actuation constraints are verified. Then, the evolution of the relative attitude can be properly controlled by modifying the angular velocity input $\omega_r$ to satisfy the constraints.
\end{remark}

\begin{remark}
	The requirement $x_d(\cdot)\in {C}^4$ in Assumption \ref{assum_setpoint} allows us to properly define the time derivative of $\omega_p$, which is required to apply the attitude control law, as shown in the next section. Indeed, by direct computation \eqref{eq:planner_ang_vel}:
	\begin{equation}\label{eq:planner_ang_acc}
	\dot{\omega}_{p}  =  -R_{r}^{T}\hat{\omega}_{r}\omega_{c}+R_{r}^{T}\left(\dot{\omega}_{c}+\dot{\omega}_{r}\right),
	\end{equation}
	where 
	\begin{equation}
	\dot{\omega}_c=(R_c^T \ddot{R}_c-\hat{\omega}_c^2)^{\vee}.\label{eq:quad_ang_acc}
	\end{equation}
\end{remark}
Explicit dependences are not reported in the above equations to avoid an overloaded notation.

\subsection{Special selections}\label{sec:special_sel}

In this section we illustrate the relevance of the proposed control scheme
for addressing some of the input limitations initially discussed in Section~\ref{sec:input_constr}.
To this end, we consider the overall control system comprising control law \eqref{eq:des_force}, \eqref{eq:cont_force} with $\Phi(R_e,e_\omega,t)$ defined according to \eqref{eq:scaling_fact} and satisfying Property \ref{prop:scaling_funct}, the rotation matrix $R_c$ and the angular velocity $\omega_c$ defined in equations \eqref{eq:quad_rot_mat} and \eqref{eq:quad_ang_vel}, respectively, and denote the closed-loop state as
$x:=(R_e,e_\omega,e_x,e_v, R_p^e)$, belonging to the manifold $T\mathrm{SO}(3)\times\mathbb{R}^6 \times \mathrm{SO}(3)$. Next we characterize the closed-loop properties for a few relevant selections of the attitude planner.

\smallskip

\noindent
{\bf (a) Vectored-thrust configuration.}
In this case it is well-known that only a desired  rotation around the vector $m(\dot{v}_d(t)+ge_3)$\footnote{$m(\dot{v}_d(t)+ge_3)$ is the force, resolved in the inertial frame, required to stay on the nominal position trajectory} can be tracked when one wants to guarantee position tracking under the constraint in \eqref{eq:coplanar_const}.  
Indeed, equation \eqref{eq:coplanar_const} requires the control force vector \eqref{eq:cont_force} (resolved in the inertial frame) to be directed along the positive direction of $b_3$, namely, from \eqref{eq:cont_force}, \eqref{eq:scaling_fact}, we must ensure that for some scalar $T\in\mathbb{R}_{>0}$,
\begin{equation}
f_c=c(R_e,t)R_p^T f_d=T e_3.
\end{equation}
This relationship can be written in vector form as:
\begin{equation}\label{eq:coplanar_const_vectform}
c\left(R_e,t\right)\bigmat{b_{p_1}^Tf_d\\b_{p_2}^Tf_d\\b_{p_3}^Tf_d}=\bigmat{0\\0\\T},
\end{equation}
Therefore, it is enough to select $b_{p_3}=\tfrac{f_d}{\Vert f_d\Vert}$ to comply with the constraint, so that \eqref{eq:coplanar_const_vectform}  is verified with $T=c(R_e,t)\Vert f_d\Vert$. By inspecting the decomposition \eqref{eq:ref_att_decomp} and the expression of $R_c$ in \eqref{eq:quad_rot_mat}, this is obtained by selecting planning strategy that guarantees $R_r(t)e_3=e_3$ $\forall t\geq 0$. A possible solution is to initialize $R_r(0)e_3=e_3$ and use $\omega_r=\Omega_r e_3$, where  $\Omega_r \in \mathbb{R}$ may be designed to track a given angle reference $\psi_d(t)$. However, the easiest solution is to select the input $\omega_r(t)=0$ (static planning), for which $R_r(t)=I_3$ $\forall t\geq 0$. This fixes two out of three parameters in the definition of $R_p=R_c$ and the third one can be used to assign a desired rotation around $\tfrac{f_d}{\Vert f_d\Vert}$.  

\begin{definition}\label{def:track_att_quad}
A trajectory $t\mapsto  (R_d(t),\omega_d(t), x_d(t),v_d(t))\in T\mathrm{SO}(3)\times \mathbb{R}^6$ satisfying Assumption \ref{assum_setpoint} is compatible with the position tracking task for the system of case (a) iff  
\begin{equation}
\frac{e_3^T f^{ss}_c(t)}{\Vert f^{ss}_c(t)\Vert}=1, 
\end{equation}
where $f^{ss}_c(t)$ is defined in equation \eqref{eq:feed_force}.
\end{definition}

Basically, the trackability condition of Definition \ref{def:track_att_quad}, \emph{i.e.}, $e_3^Tf_{c}^{ss}= m e_3^T R_d \left(\dot{v}_{d}+g e_3\right)=m\Vert\dot{v}_{d}+g e_3\Vert$, requires the axis  $R_d e_3=:b_{d_3}$ to be directed as the vector $\left(g e_3+\dot{v}_{d}\right)$: since thrust in the inertial frame can be delivered only along $b_3$, this is the only solution compatible with position tracking. On the other hand, the desired heading direction, which is given by $b_{d_1}$, is freely assignable by selecting a desired angle $\psi_d(t)$. Therefore, the tracking problem for case \emph{(a)} is defined on $\mathbb{S}^1\times \mathbb{R}^3$ and can be embedded in $\mathrm{SO}(3)\times \mathbb{R}^3$ via the assignment $\mathbb{S}^1\ni\psi_d\mapsto R_c(f_{c}^{ss}(t),\psi_d(t))\in\mathrm{SO}(3)$, where $R_c(\cdot,\cdot)$ is the map given by \eqref{eq:quad_rot_mat}-\eqref{eq:bd}. 

\begin{cor}\label{cor:vectored-thrust}
Consider the closed-loop system described by \eqref{pos_err_dyn1}, \eqref{pos_err_dyn2}, \eqref{att_err_dyn1}, \eqref{att_err_dyn2} controlled by \eqref{eq:cont_force}, \eqref{eq:cont_torque}, where the planner output is given by $(R_p,\omega_p)=(R_c,\omega_c)$, with $R_c$ and $\omega_c$ selected as in \eqref{eq:quad_rot_mat}-\eqref{eq:bd} and \eqref{eq:quad_ang_vel}, respectively.  Assume that the desired trajectory $t\mapsto (R_d(t),\omega_d(t), x_d(t),v_d(t))$ is trackable according to Definition \ref{def:track_att_quad}, that  $\Phi(\cdot,\cdot,\cdot)$ is selected according to \eqref{eq:scaling_fact} and satisfies Property \ref{propty:van_pert}, and that the gains $k_1,k_2$ and saturation levels $\lambda_1,\lambda_2$ are selected according to \cite[Prop. 1]{Naldi2017} and Proposition \ref{prop:goodR_c}. Then, for any symmetric matrix $K_R$ satisfying \eqref{eq:KR}, any $K_\omega\in\mathbb{R}^{3\times 3}_{>0}$, the control force \eqref{eq:cont_force}  satisfies the actuation constraint \eqref{eq:coplanar_const} and the point $(R_{e},\,e_\omega,\,e_x,\,e_v)=(I_3,0,0,0)$
is robustly uniformly asymptotically stable with domain of attraction containing $S_a\times {\mathbb R}^6$, where $S_a$ is given by \eqref{eq:sublevel_att}. 
\end{cor}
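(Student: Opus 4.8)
The plan is to read Corollary~\ref{cor:vectored-thrust} as the specialization of Theorem~\ref{thm:comp_stab} to the static-planning choice $R_r\equiv I_3$, $\omega_r\equiv 0$, for which $R_p=R_c$ and $\omega_p=\omega_c$. The crucial observation is that the stability claim of Theorem~\ref{thm:comp_stab} was established through the time-invariant differential inclusion \eqref{eq:comp_dyna}--\eqref{eq:comp_dynb}, whose upper subsystem $F_a$ embeds \emph{every} feasible reference orientation (the union in \eqref{eq:F_a} ranges over all $R_p\in\overline{\mbox{co}}(\mathrm{SO}(3))$ and $\Vert\omega_p\Vert\leq\omega_M$). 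Consequently the conclusion holds for any planner output satisfying Property~\ref{propty:planner_ref}, not merely for $(R_p,\omega_p)=(R_d,\omega_d)$. Thus the proof reduces to two checks: (i) that $(R_p,\omega_p)=(R_c,\omega_c)$ is feasible in the sense of Property~\ref{propty:planner_ref}; and (ii) that the resulting control force complies with the coplanar constraint \eqref{eq:coplanar_const}.

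For step~(i) I would first invoke Proposition~\ref{prop:goodR_c}: since the saturation level is chosen per the hypotheses so that $\lambda_2<\inf_{t\geq 0}\vert m(\dot v_{d_3}(t)+g)\vert$, the matrix $R_c(f_d,t)$ is well defined along all closed-loop solutions because $f_d$ never vanishes. Then, instantiating Lemma~\ref{lem:feas_ref} with $\omega_r\equiv 0\in C^1\cap L_\infty$ (the static-planning case) and with $\omega_d$ obeying Assumption~\ref{assum_setpoint}, the reference motion inherits $\dot R_c=R_c\hat\omega_c$ and $\omega_c\in C^1\cap L_\infty$, i.e.\ it satisfies Property~\ref{propty:planner_ref}. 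Feasibility in hand, Theorem~\ref{thm:comp_stab} immediately yields robust uniform asymptotic stability of $(R_e,e_\omega,e_x,e_v)=(I_3,0,0,0)$ with basin of attraction containing $S_a\times\mathbb{R}^6$.

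For step~(ii) I would substitute \eqref{eq:scaling_fact} and $R_p=R_c$ into \eqref{eq:cont_force} to get $f_c=c(R_e,t)\,R_c^T f_d$. By construction $b_{c_3}=f_d/\Vert f_d\Vert$ while $b_{c_1},b_{c_2}$ are orthogonal to $f_d$, hence $R_c^T f_d=\Vert f_d\Vert e_3$ and $f_c=c(R_e,t)\Vert f_d\Vert e_3=:T e_3$. It then remains to verify $T>0$: the scaling factor obeys $c(R_e,t)=(\ell-(1-\cos\theta_e))/\ell\geq(\ell-2)/\ell>0$ because $\ell>2$ and $1-\cos\theta_e\in[0,2]$, whereas $\Vert f_d\Vert>0$ follows from the chain of inequalities \eqref{eq:des_force_positivity} already exploited in Proposition~\ref{prop:goodR_c}. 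Therefore $f_{c_1}=f_{c_2}=0$ and $f_{c_3}=T>0$, which is precisely \eqref{eq:coplanar_const}.

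I expect the only genuine subtlety to reside in step~(i), specifically in the regularity $\omega_c\in C^1\cap L_\infty$: this demands that $R_c$ be twice continuously differentiable along solutions, which traces back, through $f_d$ and the derivatives entering \eqref{eq:quad_ang_vel} and \eqref{eq:quad_ang_acc}, to the $C^4$ smoothness of $x_d$ required in Assumption~\ref{assum_setpoint}. All of this is packaged inside Lemma~\ref{lem:feas_ref}, so once that lemma is applied with $\omega_r\equiv 0$ the difficulty dissolves and the constraint verification together with the invocation of Theorem~\ref{thm:comp_stab} are routine. I would finally remark that the trackability hypothesis of Definition~\ref{def:track_att_quad} is \emph{not} needed for stability or constraint compliance; it serves only to guarantee that the planner reference $R_c$ coincides with the desired attitude $R_d$, so that the certified convergence $R_e\to I_3$ can be interpreted as genuine tracking of $R_d$ rather than of a modified reference.
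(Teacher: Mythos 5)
Your proposal is correct and follows essentially the same route as the paper: robust uniform asymptotic stability is obtained from Theorem~\ref{thm:comp_stab} by noting that the static planning choice $\omega_r\equiv 0$, $R_r\equiv I_3$ yields a feasible reference $(R_c,\omega_c)$ via Lemma~\ref{lem:feas_ref} (with well-posedness of $R_c$ from Proposition~\ref{prop:goodR_c}), and the coplanar constraint is verified directly from $f_c=c(R_e,t)R_c^Tf_d=c(R_e,t)\Vert f_d\Vert e_3$. Your additional checks (positivity of $T$, and the observation that the differential inclusion \eqref{eq:F_a} already ranges over all admissible $(R_p,\omega_p)$ so that Theorem~\ref{thm:comp_stab} applies beyond the choice $(R_d,\omega_d)$) make explicit what the paper leaves implicit, and your remark on the role of the trackability hypothesis is consistent with the paper's proof, which indeed never invokes Definition~\ref{def:track_att_quad}.
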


\begin{proof}
Robust asymptotic stability of the curve $(R_e,e_\omega,e_x,e_v)=(I_3,0,0,0)$ follows immediately from Theorem~\ref{thm:comp_stab} combined with Lemma~\ref{lem:feas_ref} and the fact that under Assumption~\ref{assum_setpoint} the solutions of the time-varying dynamics are solutions of the differential inclusions \eqref{eq:comp_dyna}-\eqref{eq:comp_dynb}. Note that by considering $\omega_r(t)=0$, the planner has no dynamics ($R_r(t)=I_3$) and the choice $(R_c,\omega_c)$ satisfies the condition of Theorem \ref{thm:att_asympt} thanks to Lemma~\ref{lem:feas_ref}.  
Finally, the actuation constraint in equation \eqref{eq:coplanar_const} is straightforwardly verified for $R_p=R_c$ because $b_{c_3}:=R_c e_3$ is aligned with $f_d$:
\begin{equation}
f_c=c(R_e,t)R_c^T f_d=c(R_e,t)\Vert f_d\Vert e_3.
\end{equation}
\end{proof}

\begin{remark}
	The actuation constraint \eqref{eq:tilt_const} of case(b) in Section \ref{sec:input_constr} is automatically satisfied by imposing $\omega_r(t)\equiv 0,\, R_r(0)=I_3$, since case \emph{(a)} is the strictest one in terms of actuation constraints. However, this choice severely limits the capabilities of thrust-vectoring platforms.
\end{remark}

\smallskip

\noindent
{\bf (b) Thrust-vectoring configuration.}
When the actuation mechanism allows to change the direction of the thrust, the system has the potential of tracking more complex maneuvers. Within the present design, we make use of the relative rotation matrix $R_r$ introduced in \eqref{eq:ref_att_decomp} to perform attitude maneuvers that are compatible with position tracking. By considering that $R_c$ defined in \eqref{eq:quad_rot_mat} satisfies $R_c e_3=\tfrac{f_d}{\Vert f_d\Vert}$, the control force can be written as: 
\begin{equation}\label{eq:cont_forc_2}
f_c=c(R_e,t)R_r^T R_c^T f_d=c(R_e,t) R_r^T \Vert f_d\Vert e_3=c(R_e,t)\Vert f_d\Vert  R_r^T e_3.
\end{equation} 
By substituting \eqref{eq:cont_forc_2} into the constraint \eqref{eq:tilt_const}, we get:  
\begin{equation}\label{eq:cos_equiv}
\cos(\theta_c)=\frac{c(R_e,t)\Vert f_d\Vert  R_r^T e_3}{c(R_e,t)\Vert f_d\Vert }= e_3^T R_r e_3=e_3^T b_{r_3},
\end{equation} 
which shows that to satisfy \eqref{eq:tilt_const}, it is sufficient to guarantee that $e_3^Tb_{r_3}\geq \cos(\theta_M)$. In this section we will show how the solution proposed in \cite{InvernizziAUTO2018} to compute the relative angular velocity $\omega_r$ of equation \eqref{eq:rel_att_dyn} to account for the conic region constraint \eqref{eq:cont_force} and exploit the thrust-vectoring capabilities of tiltrotor configurations can be applied within the present framework. In particular, we will verify that the planner output obtained by selecting
\begin{equation}
\omega_{r}=b_{r_3}\times\text{Proj}_G\left(\omega_r^d\times b_{r_3}\right)+\left(b_{r_3}^{T}\omega_{r}^{d}\right)b_{r_3}\label{eq:rel_ang_vel_tilt}
\end{equation}
where $\text{Proj}_G:\mathbb{R}^3\rightarrow \mathbb{R}^3$ is a geometric projection operator\footnote{The geometric projection operator is an extension of the smooth projection operator, proposed in \cite{CaiSmoothProj} for systems evolving on $\mathbb{R}^n$, to the case of systems evolving on $\mathbb{S}^n$. See [equation 65]\cite{InveLoveArxiv2017} for the explicit expression.} and
\begin{equation}\label{eq:omega_r_des} 
\omega_{r}^d=R_{r}\omega_{d}-\omega_{c}-R_{r}R_{d}^{T}e_{R}^{p}
\end{equation}
with $e_R^p:=k_d\mbox{skew}(R_e^p)$, satisfies Property \ref{propty:planner_ref}. The projection operator keeps the planar vector $b_{r_3}^\perp:=\smallmat{e_1^T b_{r_3}&e_2^T b_{r_3}}$ inside a circle of radius $\delta:=\sin(\theta_M)$, as shown in Figure \ref{fig:tilt_angle_const}, by acting on the vector field $\omega_r^d$ in the region $\left(\tfrac{\delta}{\sqrt{1+\varepsilon}},\delta\right]$, where $\varepsilon \in(0,1)$ is a user-defined parameter.
In particular, thanks to the projection operator, $e_3^T b_{r_3}(t)\geq \cos(\theta_{M})$ $\forall t\geq 0$ and, by virtue of \eqref{eq:cos_equiv}, also:
\begin{equation}\label{eq:cosine_ineq}
\cos\left(\theta_c(t)\right)\geq\cos\left(\theta_{M}\right)\quad\forall t\geq 0.
\end{equation}
As a consequence, if the relative attitude is initialized such that $b_{r_3}$ is inside a cone defined by $\theta_M$ around $b_{c_3}$, it will never leave it. For instance, it suffices to select $R_r(0)=I_3$.

\begin{lem}\label{lem:rel_ang_vel_tilt}
	The relative angular velocity computed according to equation \eqref{eq:rel_ang_vel_tilt} is such that the planner output $(R_p,\omega_p)\in T\mathrm{SO}(3)$ obtained with \eqref{eq:ref_att_decomp}, \eqref{eq:planner_ang_vel}, satisfies Lemma \ref{lem:feas_ref}.
\end{lem}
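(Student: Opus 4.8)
To prove Lemma~\ref{lem:rel_ang_vel_tilt} I would reduce it to Lemma~\ref{lem:feas_ref}, whose hypotheses are exactly $\omega_r \in L_\infty \cap C^1$ together with the standing Assumption~\ref{assum_setpoint} on $\omega_d$. Since Lemma~\ref{lem:feas_ref} already establishes that these two conditions are sufficient for the planner output $(R_p,\omega_p)$ to satisfy Property~\ref{propty:planner_ref}, the entire burden of the present proof is to verify that the specific $\omega_r$ generated by \eqref{eq:rel_ang_vel_tilt}--\eqref{eq:omega_r_des} is bounded and continuously differentiable. So the proof is essentially a regularity and boundedness check on the right-hand side of \eqref{eq:rel_ang_vel_tilt}.

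\textbf{Boundedness.} First I would show $\omega_r \in L_\infty$. The vector $b_{r_3} = R_r e_3$ is a unit vector, hence bounded. For the desired field $\omega_r^d$ in \eqref{eq:omega_r_des}, I would bound each term: $R_r \omega_d$ is bounded because $R_r \in \mathrm{SO}(3)$ and $\omega_d \in L_\infty$ by Assumption~\ref{assum_setpoint}; the term $\omega_c(R_c,f_d)$ is bounded because (by Proposition~\ref{prop:goodR_c}) $R_c$ is well defined and smooth with $f_d$ bounded away from zero, so $\omega_c = (R_c^T \dot R_c)^\vee$ stays bounded provided $\dot f_d$ is bounded — which follows from the $C^4$ regularity of $x_d$ and the boundedness of $\dot v_d$, $\ddot v_d$ in Assumption~\ref{assum_setpoint} together with the saturated, hence bounded and Lipschitz, structure of $\beta$ in \eqref{eq:betadef}; and $R_r R_d^T e_R^p$ with $e_R^p = k_d\,\mathrm{skew}(R_e^p)^\vee$ is bounded because $\mathrm{skew}(\cdot)^\vee$ maps into a compact set on $\mathrm{SO}(3)$. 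Thus $\omega_r^d \in L_\infty$. The key point is then that the geometric projection operator $\mathrm{Proj}_G$ is norm-nonincreasing (it only reshapes the field to keep $b_{r_3}^\perp$ inside the circle of radius $\delta$), so $\|\omega_r\| \leq \|\omega_r^d\|$ up to the decomposition in \eqref{eq:rel_ang_vel_tilt}, giving $\omega_r \in L_\infty$.

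\textbf{Continuous differentiability.} Next I would establish $\omega_r \in C^1$. Here I would argue that each ingredient is $C^1$ as a function of time along solutions: $R_r$ is $C^1$ from its own kinematics \eqref{eq:rel_att_dyn} once $\omega_r$ is continuous, so $b_{r_3}$ is $C^1$; $\omega_c$ is $C^1$ because $\dot\omega_c$ in \eqref{eq:quad_ang_acc} exists and is continuous under the $C^4$ requirement on $x_d$ (which is precisely why that regularity level was imposed, as noted in the remark following Lemma~\ref{lem:feas_ref}); $\omega_d \in C^1$ by Assumption~\ref{assum_setpoint}; and $R_e^p = R_p R_d^T$ is $C^1$. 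The delicate factor is the projection operator: I would invoke the cited fact that $\mathrm{Proj}_G$ is a smooth (at least $C^1$) extension of the smooth projection of \cite{CaiSmoothProj} to the sphere $\mathbb{S}^n$, so that the composition $\mathrm{Proj}_G(\omega_r^d \times b_{r_3})$ inherits $C^1$ regularity from its $C^1$ arguments. Concluding, \eqref{eq:rel_ang_vel_tilt} is a $C^1$ combination of $C^1$ and bounded quantities, hence $\omega_r \in L_\infty \cap C^1$, and Lemma~\ref{lem:feas_ref} applies to give Property~\ref{propty:planner_ref}.

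\textbf{Main obstacle.} The step I expect to be most delicate is the smoothness of the geometric projection operator: unlike boundedness, which follows from its defining nonexpansive property, $C^1$ regularity hinges on the particular construction in \cite{InveLoveArxiv2017} and on the fact that the blending in the boundary layer $(\delta/\sqrt{1+\varepsilon},\,\delta]$ is done with a smooth interpolation rather than a hard switch. I would therefore lean on the cited explicit expression to assert $C^1$ regularity rather than rederiving it, and devote the main effort to confirming that the \emph{arguments} fed into $\mathrm{Proj}_G$ are themselves $C^1$ and bounded along closed-loop solutions.
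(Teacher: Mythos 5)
Your proposal is correct and follows essentially the same route as the paper's proof: reduce the claim to verifying $\omega_r\in L_\infty\cap C^1$, obtain boundedness from the triangle inequality on \eqref{eq:rel_ang_vel_tilt}, the norm-nonincreasing property of $\mathrm{Proj}_G$, and the bounds $\Vert\omega_r^d\Vert\leq\Vert\omega_d\Vert+\Vert\omega_c\Vert+\Vert e_R^p\Vert$ (with $\omega_c$ bounded as in Appendix \ref{proof_quad_feas_ref} and $e_R^p$ bounded by compactness of $\mathrm{SO}(3)$), and obtain $C^1$ regularity from the smoothness of the projection operator. The paper's proof is terser but contains no additional idea beyond what you wrote.
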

\begin{proof}
	By exploiting a smooth Projection operator \cite{CaiSmoothProj}, $\omega_r$ defined in \eqref{eq:rel_ang_vel_tilt} is continuously differentiable and its time derivative can be computed as in \eqref{eq:planner_ang_acc}. Then, we can write the following bound on $\omega_r$:
	\begin{align}
	\Vert\omega_{r}\Vert\leq\Vert\text{Proj}_G(\omega_r^d\times b_{r_3})\Vert+\Vert\omega_{r}^{d}\Vert
	\end{align}
	Notice that the projection operator simply removes the radial component of $\dot{b}_{r_3}^{d}$,  when it is working, hence $\Vert\text{Proj}_G(\omega_r^d\times b_{r_3})\Vert\leq\Vert\omega_{r}^{d}\Vert$ because $b_{r_3}$ is a unit vector. Finally, we can conclude the boundedness of $\omega_r^d$ from the following inequality:
	\begin{align}
	\Vert\omega_{r}^d\Vert\leq\Vert\omega_d\Vert+\Vert\omega_c\Vert+\Vert e_R^p\Vert,
	\end{align}
	in which $\omega_d$ is bounded according to Assumption \ref{assum_setpoint}, $\omega_c$ is bounded as shown in Appendix \ref{proof_quad_feas_ref}, and $e_R^p$ is bounded as well, being the left-trivialized derivative of a function defined on a compact manifold.
\end{proof}

\begin{definition}\label{def:track_att_tilt}
A trajectory $t\mapsto  (R_d(t),\omega_d(t), x_d(t),v_d(t))\in T\mathrm{SO}(3)\times \mathbb{R}^6$ satisfying Assumption \ref{assum_setpoint} is compatible with the position tracking task for the system of case (b) if, given $\delta:=\sin(\theta_M)$, $\varepsilon\in(0,1)$ and $\theta_b:=\arcsin\left(\frac{\delta}{\sqrt{1+\epsilon}}\right)$, there exists $\bar{t}\in\mathbb{R}_{\geq 0}$, such that
		\begin{equation}
		\frac{e_3^T f^{ss}_c(t)}{\Vert f^{ss}_c(t)\Vert}\geq\cos(\theta_b)\quad\forall t\geq \bar{t}, 
		\end{equation}
		where $f^{ss}_c(t)$ is defined in equation \eqref{eq:feed_force}.
\end{definition}

According to Definition \ref{def:track_att_tilt}, the attitude motion is compatible with the conic region constraint \eqref{eq:tilt_const} if the angle between the third desired axis $b_{d_3}$ and the nominal force $R_d f_{c}^{ss}(t)$ is within the cone in which the $\mbox{Proj}_G$ operator is not active.

The next corollary combines the results of Theorem \ref{thm:comp_stab} and Lemma \ref{lem:rel_ang_vel_tilt}. 

\begin{cor}\label{cor:comp_stab}
	Consider the closed-loop system described by \eqref{pos_err_dyn1}, \eqref{pos_err_dyn2}, \eqref{att_err_dyn1}, \eqref{att_err_dyn2}, \eqref{planner_err_dyn} controlled by \eqref{eq:cont_force}, \eqref{eq:cont_torque}, where the planner output  $(R_p,\omega_p)$ is given by \eqref{eq:ref_att_decomp} and \eqref{eq:planner_ang_vel_2}, with $R_c$, $\omega_c$ and $\omega_r$ selected as in \eqref{eq:quad_rot_mat}-\eqref{eq:bd}, \eqref{eq:quad_ang_vel} and \eqref{eq:rel_ang_vel_tilt}-\eqref{eq:omega_r_des}, respectively.  Assume that the desired trajectory $t\mapsto (R_d(t),\omega_d(t), x_d(t),v_d(t))$ is trackable according to Definition \ref{def:track_att_tilt}, that $\Phi(\cdot,\cdot,\cdot)$ is selected according to \eqref{eq:scaling_fact} and satisfies Property \ref{propty:van_pert}, and that the gains $k_1,k_2$ and saturation levels $\lambda_1,\lambda_2$ in \eqref{eq:betadef} are selected according to \cite[Prop. 1]{Naldi2017} and Proposition \ref{prop:goodR_c}. Then, for any symmetric matrix $K_R$ satisfying \eqref{eq:KR}, $K_\omega\in\mathbb{R}^{3\times 3}_{>0}$, $\varepsilon\in(0,1)$ and $k_d\in\mathbb{R}_{>0}$, the control force \eqref{eq:cont_force} satisfies the actuation constraint \eqref{eq:tilt_const} and the point $(R_{e},\,e_\omega,\,e_x,\,e_v,R_e^p)=(I_3,0,0,0,I_3)$ is robustly asymptotically stable with  basin of attraction containing the set $ S_a\times \mathbb{R}^6\times S_{ap}$, where ${S}_{a}$ is given by \eqref{eq:sublevel_att} and ${S}_{ap}:=\left\{R_e^p\in\mathrm{SO}(3):\, e_3^T R_d^T(0) (R_e^p)^T\tfrac{f_d(0)}{\Vert f_d(0) \Vert}\geq \cos(\theta_M)\right\}$, with $f_d$  defined in \eqref{eq:des_force}.
\end{cor}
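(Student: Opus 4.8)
The plan is to prove the two claims separately, reducing the stability assertion to Theorem~\ref{thm:comp_stab} augmented by a convergence analysis of the newly introduced planner error $R_e^p$. For the actuation constraint, I would make rigorous the computation leading to \eqref{eq:cos_equiv}: since $R_c e_3 = f_d/\Vert f_d\Vert$, the control force \eqref{eq:cont_force} satisfies $\cos(\theta_c(t)) = e_3^T b_{r_3}(t)$, so that \eqref{eq:tilt_const} is equivalent to $e_3^T b_{r_3}(t)\geq\cos(\theta_M)$. Using the identity $b_{r_3} = R_r e_3 = R_c^T R_e^p R_d e_3$, the initial condition $R_e^p(0)\in S_{ap}$ is \emph{exactly} the statement that $b_{r_3}(0)$ lies inside the cone of half-angle $\theta_M$ about $e_3$. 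I would then invoke the defining property of $\mathrm{Proj}_G$, namely that it renders the disk $\{b_{r_3}^\perp:\Vert b_{r_3}^\perp\Vert\leq\delta\}$, $\delta=\sin(\theta_M)$, forward invariant by removing the outward radial component of the flow on its boundary, to conclude $e_3^T b_{r_3}(t)\geq\cos(\theta_M)$ for all $t\geq 0$, hence \eqref{eq:cosine_ineq} and the constraint.

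For the stability claim I would organize the argument as a further reduction on top of Theorem~\ref{thm:comp_stab}. By Lemma~\ref{lem:rel_ang_vel_tilt} the planner output produced by \eqref{eq:rel_ang_vel_tilt}--\eqref{eq:omega_r_des} satisfies Property~\ref{propty:planner_ref}, so the solutions of the time-varying closed loop embed into the differential inclusion \eqref{eq:comp_dyna}--\eqref{eq:comp_dynb} as in Theorem~\ref{thm:comp_stab}. I would then introduce the forward-invariant set $\Gamma:=\{(R_e,e_\omega,e_x,e_v,R_e^p):R_e^p=I_3\}$: on $\Gamma$ one has $e_R^p=0$, hence $\omega_p=\omega_d$ and $R_p=R_d$, so the restricted dynamics coincides exactly with the setting of Theorem~\ref{thm:comp_stab}, which gives robust uniform asymptotic stability of $\mathcal{A}:=\{(I_3,0,0,0,I_3)\}$ relative to $\Gamma$ with basin containing $S_a\times\mathbb{R}^6\times\{I_3\}$. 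It then remains to prove that $\Gamma$ itself is asymptotically stable for the full dynamics with basin containing $T\mathrm{SO}(3)\times\mathbb{R}^6\times S_{ap}$; combining the two statements through the reduction theorems of \cite{MaggioreZac17} yields asymptotic stability of $\mathcal{A}$ with the claimed basin $S_a\times\mathbb{R}^6\times S_{ap}$, and robust $KL$ stability then follows from the regularity of the inclusion via \cite[Thm 7.21]{Goebel2012}, exactly as in Theorem~\ref{thm:comp_stab}.

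The crux is therefore the convergence $R_e^p\to I_3$. Here I would exploit the key algebraic feature of the design \eqref{eq:omega_r_des}: when $\mathrm{Proj}_G$ is inactive one has $\omega_r=\omega_r^d$, which makes the frame angular velocity $\omega_c$ cancel identically in $\omega_p$, leaving $\omega_p-\omega_d=-R_d^T e_R^p$; substituting into \eqref{planner_err_dyn} and using \eqref{eq:veemap} produces the \emph{autonomous} gradient flow $\dot{R}_e^p=-R_e^p\hat{e}_R^p$ on $\mathrm{SO}(3)$, which renders $R_e^p=I_3$ asymptotically stable for any $k_d>0$ (certified by the Lyapunov function $\Psi(R_e^p)$). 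I would use the trackability condition of Definition~\ref{def:track_att_tilt} to guarantee that, for $t\geq\bar t$, the target $R_e^p=I_3$ lies in the interior of the feasible cone, i.e.\ in the region where $\mathrm{Proj}_G$ is inactive, so that the projection switches off in a neighbourhood of $\Gamma$ and the autonomous convergent dynamics governs the asymptotic behaviour; the standard property of the smooth projection operator, that it never increases the nominal Lyapunov derivative while keeping $b_{r_3}$ in the cone, ensures that the projection cannot obstruct this decrease along the transient.

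The main obstacle I anticipate lies precisely in this last point. While $\mathrm{Proj}_G$ is active, the planner error is coupled to the position error through $f_d$ (hence through $\omega_c$), so $\Gamma$ is \emph{not} an exact cascade and the clean autonomous reduction above only holds asymptotically. Making the attractivity argument rigorous over the whole basin $S_{ap}$ therefore requires combining three ingredients: the forward invariance of the feasible cone (already established for the constraint), the eventual inactivity of the projection granted by Definition~\ref{def:track_att_tilt}, and a small-signal/reduction argument analogous to the one underlying Theorem~\ref{thm:comp_stab} to dominate the transient coupling with the position loop.
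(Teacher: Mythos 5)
Your proposal is correct and follows essentially the same route as the paper: Lemma~\ref{lem:rel_ang_vel_tilt} plus Theorem~\ref{thm:comp_stab} for the stability of $(R_e,e_\omega,e_x,e_v)$, forward invariance of the cone under $\mathrm{Proj}_G$ for the actuation constraint, and the trackability condition of Definition~\ref{def:track_att_tilt} together with the structure of $\omega_r^d$ in \eqref{eq:omega_r_des} for the convergence of $R_e^p$. The transient coupling you flag as the main obstacle is exactly the part the paper does not work out either --- it is delegated wholesale to the proof of \cite[Thm 3]{InvernizziAUTO2018} --- so your explicit cancellation $\omega_p-\omega_d=-R_d^Te_R^p$ and the reduction on $\Gamma=\{R_e^p=I_3\}$ actually supply more of the mechanism than the published argument does.
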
 
\begin{proof}
	The proof is based on Theorem \ref{thm:comp_stab}, Lemma \ref{lem:rel_ang_vel_tilt} and \cite[Thm 3]{InvernizziAUTO2018}. Lemma \ref{lem:rel_ang_vel_tilt}  guarantees that the output planner reference $(R_p,\omega_p)\in T\mathrm{SO}(3)$ computed according to \eqref{eq:ref_att_decomp} and \eqref{eq:rel_ang_vel_tilt} satisfies Property \ref{propty:planner_ref}. Therefore, the assumptions of Theorem \ref{thm:comp_stab} are satisfied and the point $(R_{e},\,e_\omega,\,e_x,\,e_v)=(I_3,0,0,0)$ is UAS. Furthermore, because $R_e^p$ evolves on $\mathrm{SO}(3)$ according to \eqref{planner_err_dyn}, \eqref{eq:planner_ang_vel_2} with $\omega_r$ given by \eqref{eq:rel_ang_vel_tilt} and because the projection operator guarantees that equation \eqref{eq:cosine_ineq} holds $\forall t\geq 0$, the first part of the theorem is demonstrated. Then, following the proof \cite[Thm 3]{InvernizziAUTO2018}, combining the trackability condition in Definition \ref{def:track_att_tilt} and the properties of the projection operator together with the expression of $\omega_r^d$ in \eqref{eq:omega_r_des}, we can conclude the second claim of the theorem. 
\end{proof}

\begin{figure}
\begin{centering}
	\includegraphics[scale=0.25]{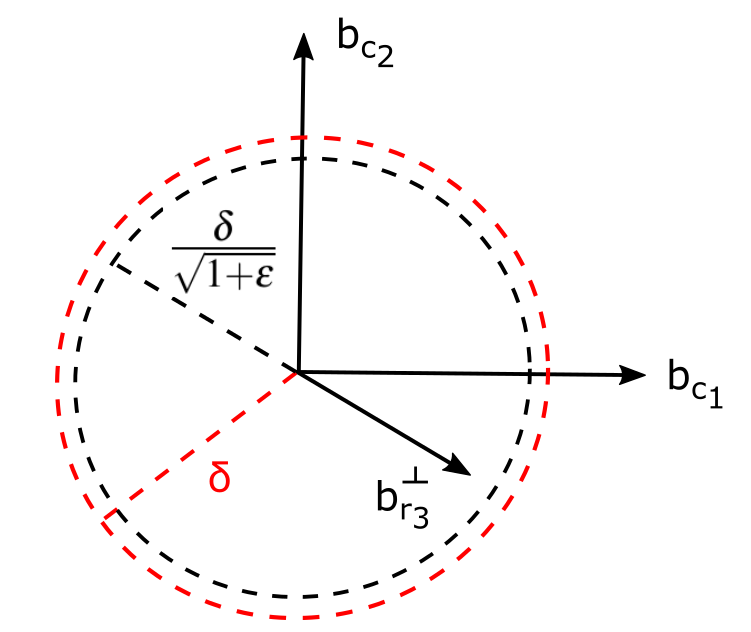}
	\par\end{centering}
\caption{Geometric interpretation of the tilt-angle constraint
	\label{fig:tilt_angle_const}}
\end{figure}

\section{Numerical results: hexacopter with and without tilted propellers} \label{sec:tilt_case}
\subsection{Considered platform}
The control law designed in the previous sections will be applied to the case of a hexacopter UAV with and without tilted propellers. As per the tilted configuration, each propeller is tilted by an angle $\alpha$ with respect to the local $x$-axis, as shown in Figure \ref{fig:Reference-frame-definition}. By setting $\alpha=0^\circ$, the standard coplanar (vectored-thrust) configuration  is recovered (Figure \ref{fig:VT_conf}).
Usually, rotor angular velocities are the physical inputs for multirotor UAVs equipped with DC-brushless propellers, \emph{i.e.}, $u=\smallmat{\omega_1 &\cdots& \omega_n}^T$, as one assumes that fast low level controllers exist to track the desired rotor angular velocities. In this setting, the input constraints are of the kind $\omega_i\in[0,\omega_M]$ $\forall i$, namely, the angular velocity $\omega_i$ of each rotor must be positive (unidirectional propellers) and upper bounded. By means of aerodynamic principles, the propellers deliver the control wrench $w_c=(f_c,\,\tau_c)$ to the UAV, which is related to the physical inputs by following the mixer map:
\begin{align}
f_c&=\sum_{i=1}^6 k_f \omega_i^2 R_{p_i}e_3 \\
\tau_c&=\sum_{i=1}^6\left(\ell_a R_{e_3}\left((i-1)\tfrac{\pi}{3}\right)e_1\times k_f\omega_i^2 R_{p_i}e_3-(-1)^i k_\tau  \omega_i^2 R_{p_i}e_3\right)\nonumber,
\end{align}
where $R_{p_i}=R_{e_3}\left((i-1)\tfrac{\pi}{3}\right)R_{e_1}(\alpha)$, $k_f$ and $k_\tau$ are the thrust and torque coefficients, respectively, whilst $\omega_i$ is the \emph{i-th} rotor angular velocity and $\ell_a$ is the arm length. For a fixed value of the tilt angle $\alpha\in[-90^\circ,90^\circ]$, the mapping can be compactly written as $w_c=M(\alpha) \bar u$, where $\bar  u:=\smallmat{\omega_1^2&\ldots&\omega_6^2}^T\in\mathbb{R}^6$ and $M(\alpha)\in\mathbb{R}^{6\times 6}$ is a constant matrix parametrized by $\alpha$. For the vectored-thrust configuration $(\alpha=0^\circ)$, the rank of $M(\alpha=0)$ is four and it is not possible to compute the physical inputs from $M \bar u=w_c$ unless $w_c$ is in the column space of $M$, \emph{i.e.}, when the only non-null component of the control force $f_c$ is $f_{c_3}$. This is understood as the coplanar platform cannot deliver a control force in the plane perpendicular to $b_3$.  On the other hand, matrix $M(\alpha)$ has rank $6$ whenever $\alpha\neq 0^\circ,\pm 90^\circ,-\arctan(\ell_a k_f/k_\tau), \arctan(k_\tau/(\ell_a k_f))$ (thrust-vectoring configuration) \cite{Morbidi2018}. However, non-feasible, \emph{i.e.}, negative, angular velocities for individual propellers can be obtained by inverting $M$ for a given control wrench. From a physical point of view, this can be understood by considering that the control force delivered according to $M(\alpha)\bar u$ can span only a predefined region in the space around $b_3$, which is dependent on the value of the tilt angle. In particular, the control force must approximately lie within a cone around the third body axis, of an angle dependent on the tilt angle of the propellers. Therefore, the model of case \emph{(b)} in Section \ref{sec:input_constr} is reasonable to approximate the actuation constraint of the considered hexacopter. To ensure the invertibility of the mixer map in a broad range of operating condition, a parameter $\sigma\in (0,1]$ is exploited in the control law presented in Section \ref{sec:special_sel}, such that the conic region \eqref{eq:tilt_const} is $\theta_M=\sigma \alpha$. 

The simulation model used in the following examples is a multi-body system with seven bodies (a central body and six propellers groups), which is written in the Modelica modeling language. The dynamics of the propellers is described by a first order system with time constant $\tau_p=0.05\unit{s}$. Aerodynamic effects are included to increase the reliability of the simulation. Specifically, we consider a damping effect on the rotational dynamics, namely $\tau_d:=-D_a \omega$, where $D_a=\mbox{diag}(0.04,0.04,0.02)$, and the contribution of body and induced drag on the position dynamics, which are collected in the disturbance force \cite{Hua2015}
\begin{equation}\label{eq:drag}
f_d:=-c_d \Vert v \Vert v-\sum_{i=1}^6 c_I \sqrt{T_i}(v_i-(v_i^Tu_i)u_i)
\end{equation}
where $c_d=0.01,\,c_I=0.05$ are the body and induced drag coefficients, respectively, $v_i$ is the velocity of the hub of the \emph{i-th} rotor, $u_i$ is the unit vector describing the current orientation of the \emph{i-th} propeller axis and $T_i$ is the thrust magnitude delivered by the \emph{i-th} rotor.
For the sake of conciseness, we report only the nominal inertial values used for control design. The mass and inertial matrix of the UAV are $ m=1\, \unit{kg}$ and $J=\mbox{diag}\left(0.008,0.008,0.016\right)\unit{kg \cdot m^{2}}$, respectively.
The controller gains are $K_{R}=\mbox{diag}(0.6,0.6,1.4),\, K_{\omega}=0.2 I_3 ,\, \ell=2.1 ,\, k_d=2,\,
\lambda_2=9,\,\lambda_1=1 ,\, k_1=0.06 \quad k_2=9 ,\, \varepsilon=0.05.$
In both the simulations the desired position trajectory is a circle 
\begin{equation}
x_d(t)=\bigmat{\cos(\Omega_d(t) t)& \cos(\Omega_d(t) t) &0}^T\unit{m}
\end{equation}
where $t\mapsto\Omega_{d}(t)$ is made by two constant intervals connected by a smooth ramp (top of Figure \ref{fig:trackability}). The desired attitude is $R_d(t)=I_3$, \emph{i.e.}, the UAV has to fly with level attitude, a requirement that is never compatible with vectored-thrust configurations in the sense of Definition~\ref{def:track_att_quad}. The initial conditions correspond to hover at $x(0)=\smallmat{1&0&0}^T \unit{m}$. 

\subsection{Simulation A - vectored-thrust configuration}

In the first simulation example the proposed control law is applied to the hexacopter with coplanar propellers ($\alpha=0^\circ$).

As shown in Section~\ref{sec:special_sel}, this kind of platform cannot track an arbitrary attitude trajectory when position tracking is the primary objective: only a desired yaw angle can be tracked, which, for the present case, is $\psi_d(t)=0$. The attitude planner is implemented according to Corollary \ref{cor:vectored-thrust}, \emph{i.e.}, the static planning strategy of \cite{Leeetal2010} is employed.
Figure \ref{fig:pos_err_VT} shows that, after a transient phase, the position tracking errors remain bounded: aerodynamic drag prevents their actual convergence to zero.  The oscillations shown at steady state are induced by the combination of aerodynamic drag (which works in the direction of $-v$) and the periodic nature of the circular motion. The attitude tracking performance is shown in Figure \ref{fig:att_err_VT}, where the inclination angle of the body axis $b_3$ with respect to the inertial axis $e_3$, \emph{i.e.},  $\theta_v:=\arccos(b_3^T e_3)$, is plotted (top) together with the yaw angle $\psi$ (bottom). Since the desired attitude is $R_d(t)=I_3$, the corresponding desired values are $\theta_v^d(t)=0^\circ$ and $\psi_d(t)=0^\circ$. While the yaw angle is kept close to zero, the UAV cannot fly at  level attitude: the inclination angle of the vehicle is always greater than zero, reaching a pick during the acceleration phase ($\theta_v(t=21\unit{s})\approx 22^\circ$). As expected, the control law allows to track the closest feasible attitude while guaranteeing position tracking, up to the effect of disturbances and unmodeled dynamics.  

\begin{figure}
	\begin{centering}
		\includegraphics[scale=0.2]{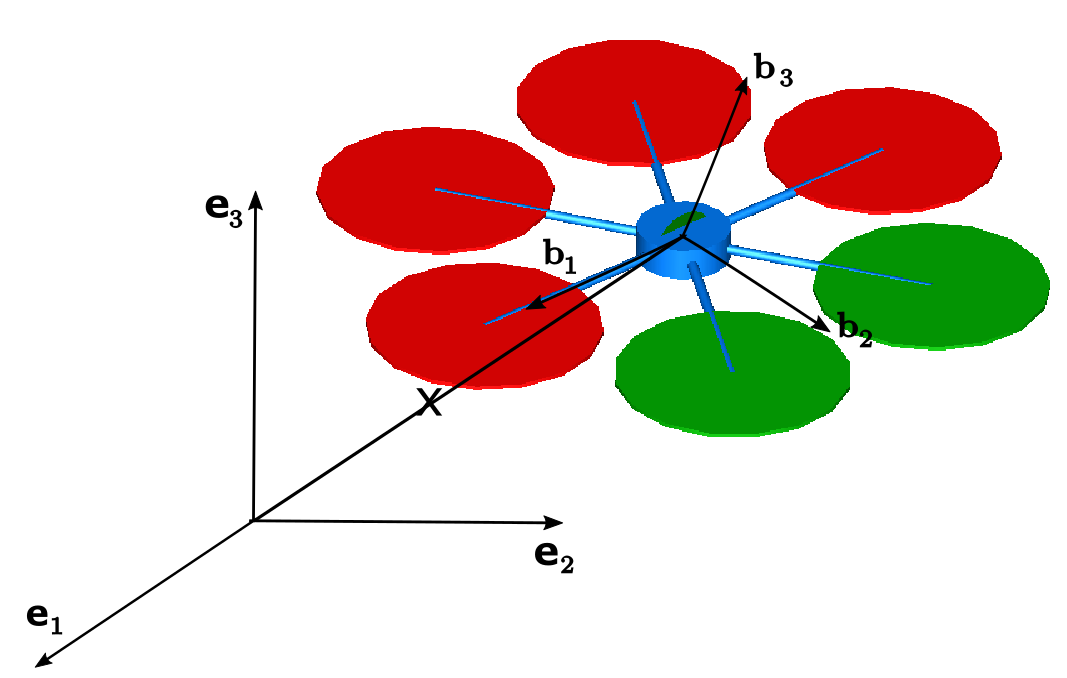}
		\par\end{centering}
	\caption{Hexacopter with vectored-thrust configuration ($\alpha=0^\circ$)\label{fig:VT_conf}}
\end{figure}

\begin{figure}[ht!]
	\begin{centering}
		\includegraphics[scale=0.5]{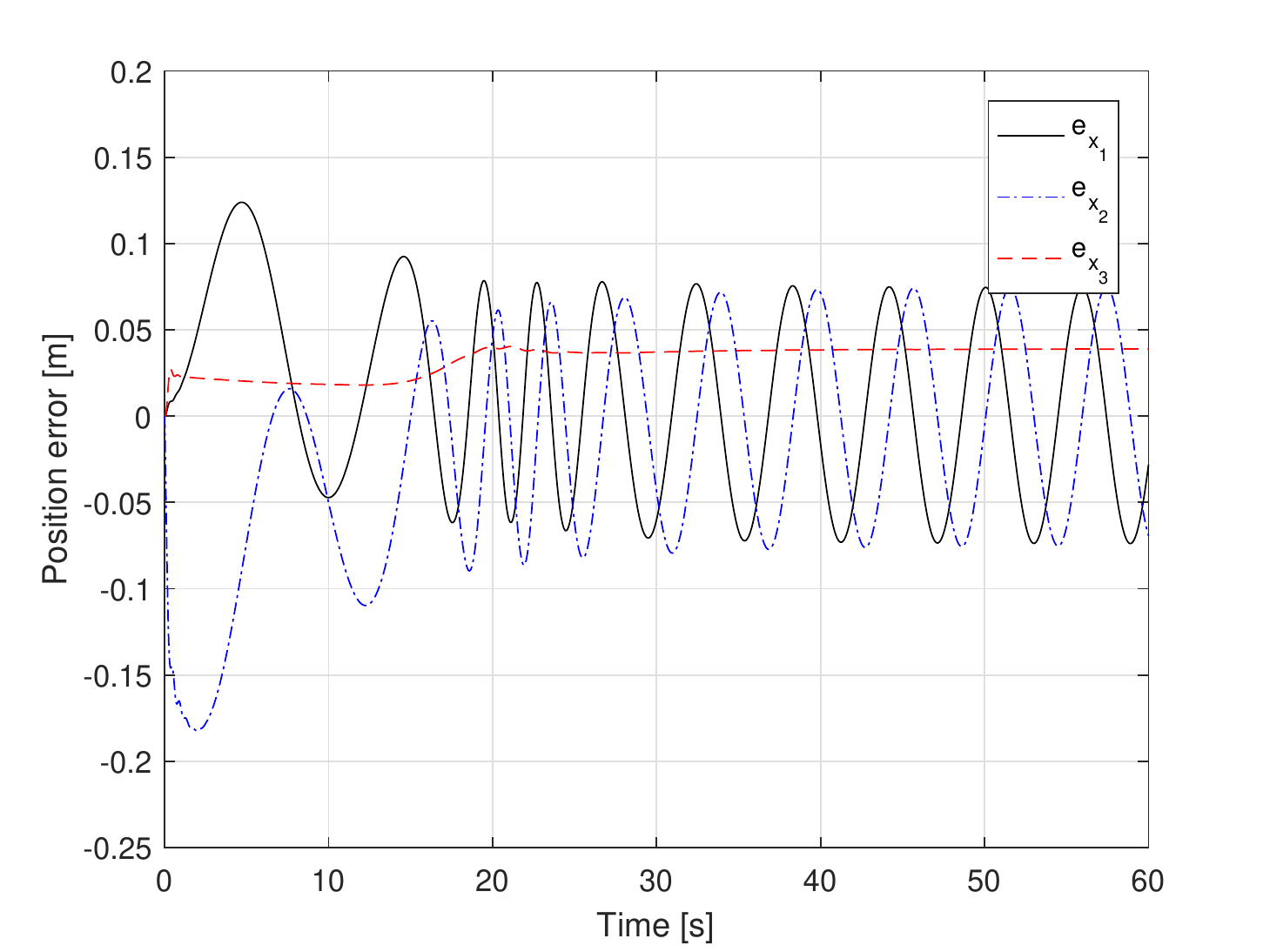}
		\par\end{centering}
	\caption{Position tracking error $e_{x}$.\label{fig:pos_err_VT}}
\end{figure}
\begin{figure}[ht!]
	\begin{centering}
		\includegraphics[scale=0.5]{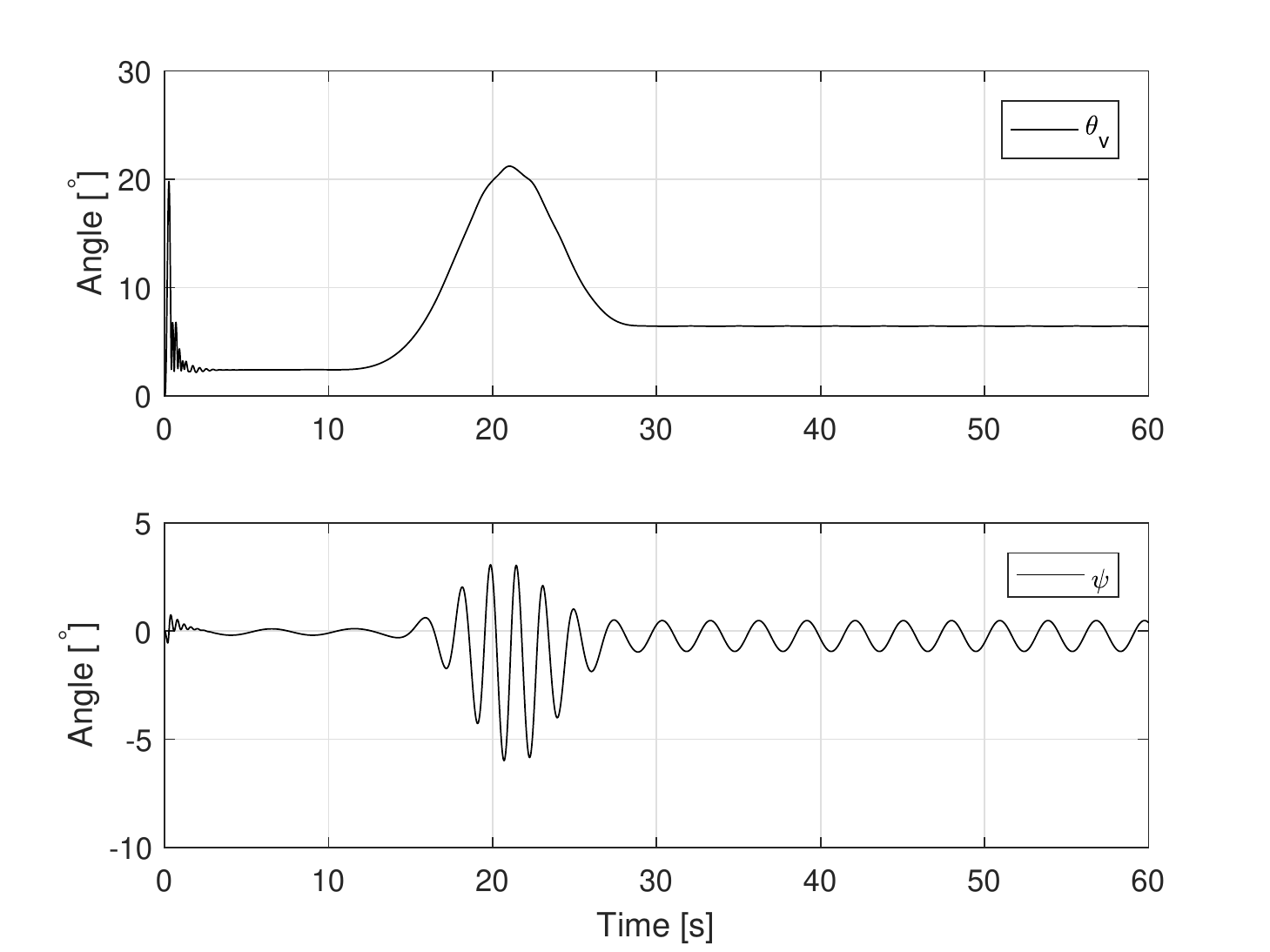}
		\par\end{centering}
	\caption{Attitude tracking - Inclination (top) and yaw angle (bottom).\label{fig:att_err_VT}}
\end{figure}

\subsection{Simulation B - thrust-vectoring configuration}

In the second simulation example we consider the hexacopter in the tilted-propellers configuration. Specifically, each propeller is tilted of an angle $\alpha=20^\circ$. In this case, the conic region \eqref{eq:tilt_const} with $\theta_{M}=10^{\circ}$ was found to be a reasonable approximation of the wrench map $M(\alpha)$ to ensure its invertibility in a broad operational range. 

By inspecting Figure \ref{fig:trackability} (bottom), the considered trajectory is not always feasible in the sense of Definition \ref{def:track_att_tilt}. Indeed, the inclination angle of the steady state control force \eqref{eq:feed_force} with respect to the vertical body axis, \emph{i.e.}, $\theta_n:=\arccos(e_3^T f^{ss}_c(t)/\Vert f^{ss}_c(t))$, is greater than the maximum tilt angle $\theta_M$ in the interval $t\in[17, 26]\unit{s}$. It is worth mentioning that $\theta_n$ represents also the inclination that a vectored-thrust UAV would have to reach to guarantee position tracking (compare Figure \ref{fig:att_err_VT} (top) and Figure \ref{fig:trackability} (bottom)).
Figure \ref{fig:pos_err} shows that, after the transient phase, the position tracking errors remain bounded with steady state oscillations induces by aerodynamic drag. It is interesting to observe that even though the inertial and control parameters are the same for the two simulations, the position errors are slightly smaller for the tilted-propellers UAV (compare Figure \ref{fig:pos_err_VT} and Figure \ref{fig:pos_err}): aerodynamic drag depends upon the system configuration \eqref{eq:drag}, in particular on the UAV attitude. The control force $f_c$ as computed according to equation ~\eqref{eq:cont_forc_2} is reported in Figure~\ref{fig:control_force}. For what concerns the attitude tracking performance, Figure \ref{fig:att_err} depicts the inclination angle $\theta_v$ (top) together with the yaw angle $\psi$ (bottom). When the trajectory is feasible, the UAV is capable of flying at almost  level attitude ($\theta_v(t) \approx 0.5^\circ$). On the contrary, during the initial transient and the acceleration phase, when the desired attitude is not compatible with the thrust-vectoring constraint \eqref{eq:tilt_const} and position tracking, the attitude tracking objective is only partially achieved and the projection operator is working to modify the attitude reference so that position tracking is guaranteed. Note from Figure \ref{fig:trajectory} and Figure \ref{fig:att_err} that the vehicle is inclined of an angle $\theta_v\approx 11.7^\circ$ at $\bar{t}\approx 21\unit{s}$ whereas the nominal angle is $\theta_n (\bar{t})\approx 21^\circ>\theta_M$. Therefore, the proposed solution tries to stay as close as possible to the desired attitude ($\theta^d_v=0$) even if the trajectory is not trackable. Furthermore, the primary objective is not affected during this phase: the position tracking performance is not deteriorated. Finally, the conic region constraint \eqref{eq:tilt_const} is satisfied at all times, \emph{i.e.}, $\theta_c(t) \leq \theta_M$, as  shown in Figure \ref{fig:trackability} (bottom, solid line). The attitude planner reference $\omega_p$ is reported in Figure~\ref{fig:planner_ang_vel}, which confirms the smoothness of the signal computed according to equation \eqref{eq:planner_ang_vel_2} when the planner is modifying the desired attitude to satisfy constraint~\eqref{eq:tilt_const}.
 
\begin{figure}[ht!]
	\begin{centering}
		\includegraphics[scale=0.6]{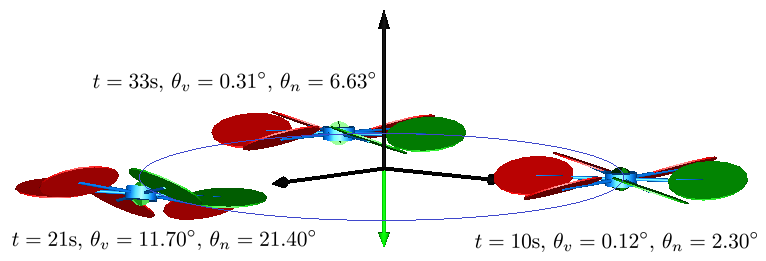}
		\par\end{centering}
	\caption{Trajectory followed by the hexacopter.
		\label{fig:trajectory}}
\end{figure}
\begin{figure}[ht!]
	\begin{centering}
		\includegraphics[scale=0.5]{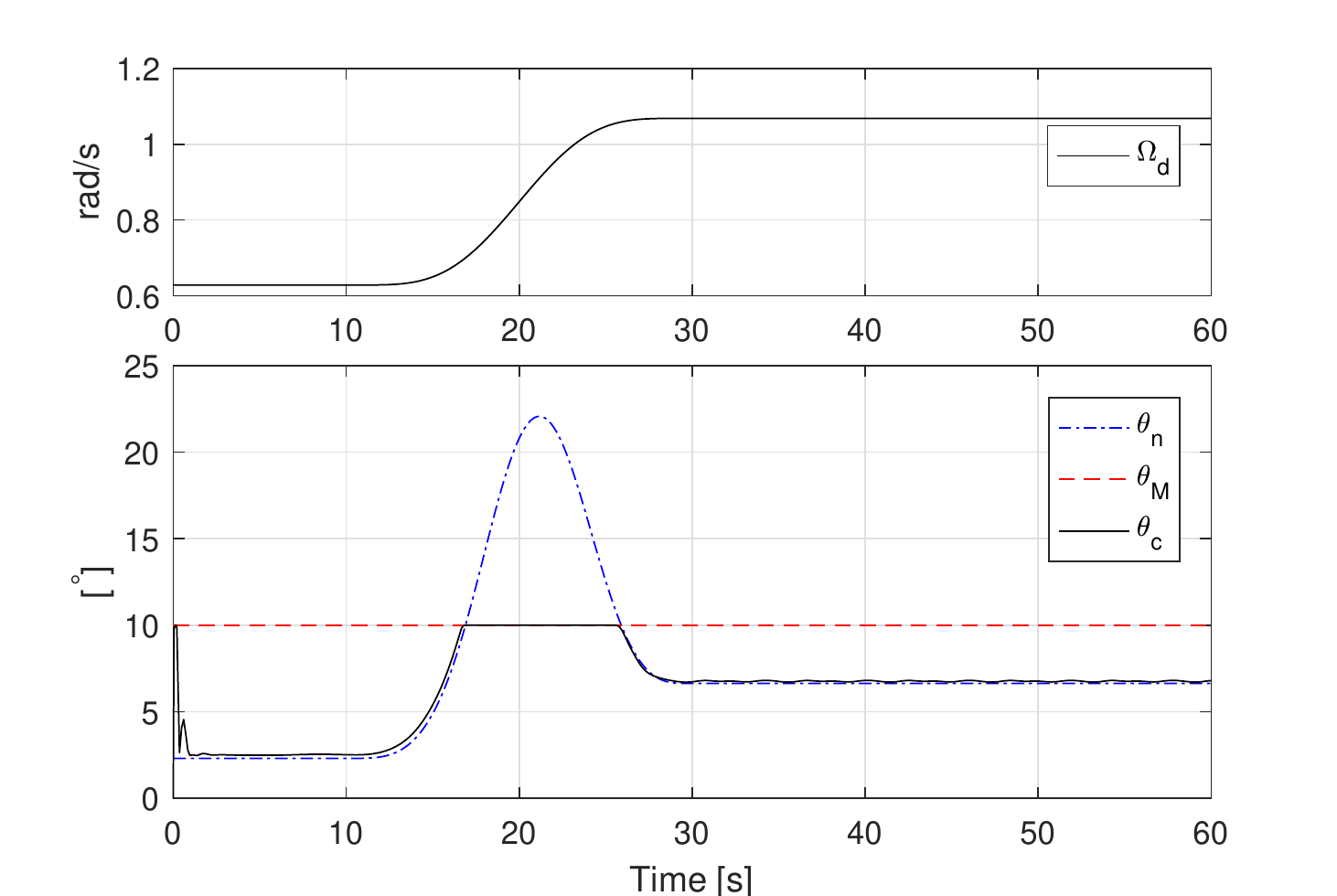}
		\par\end{centering}
	\caption{$\Omega_d(t)$ (top) - Maximum tilt angle $\theta_M$, nominal angle $\theta_n$ of the steady state force with respect $b_3$ and control force angle $\theta_c$ (bottom).\label{fig:trackability}}
\end{figure}
\begin{figure}[ht!]
	\begin{centering}
		\includegraphics[scale=0.5]{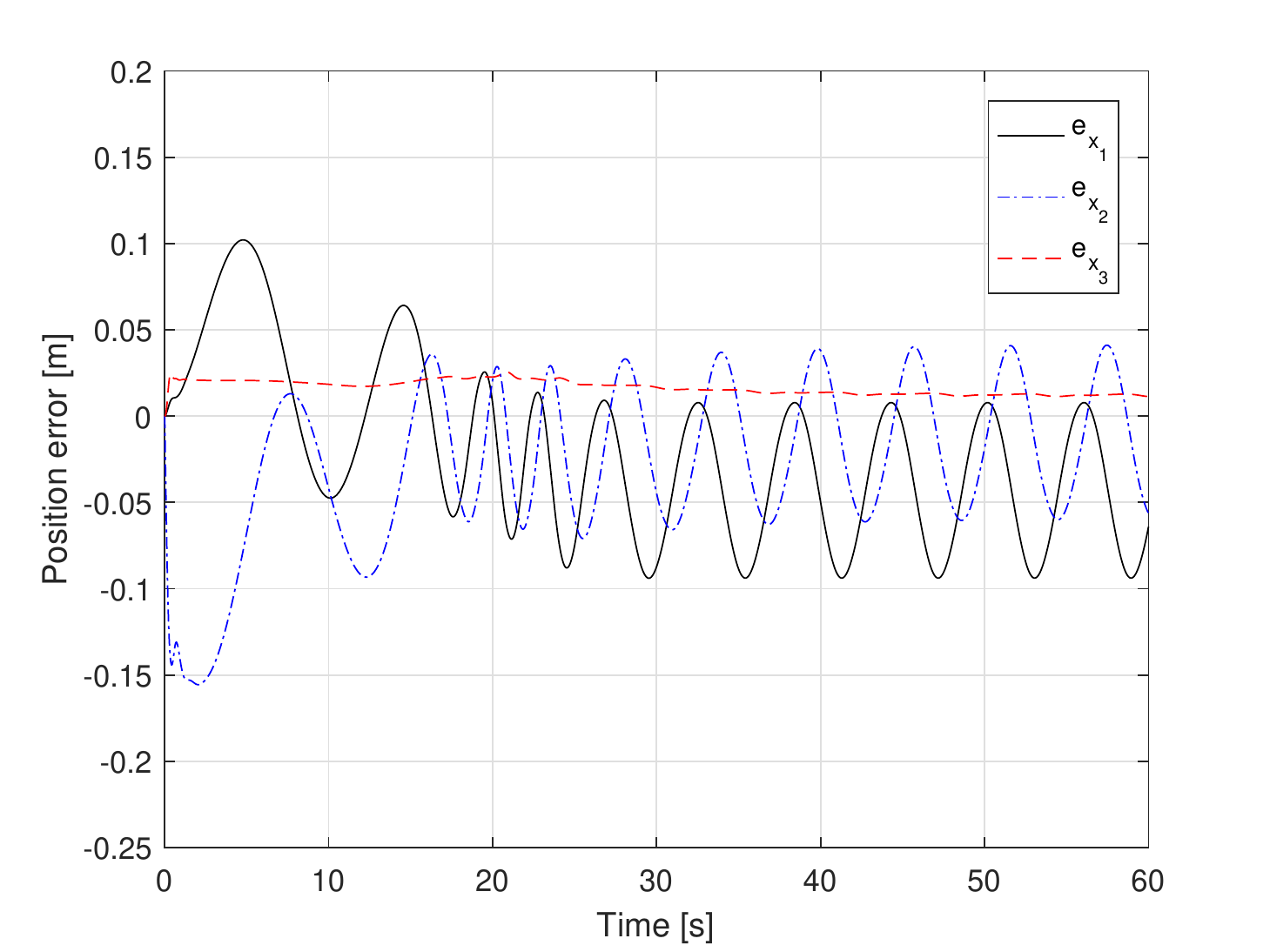}
		\par\end{centering}
	\caption{Position tracking error $e_{x}$.\label{fig:pos_err}}
\end{figure}
\begin{figure}[ht!]
	\begin{centering}
		\includegraphics[scale=0.5]{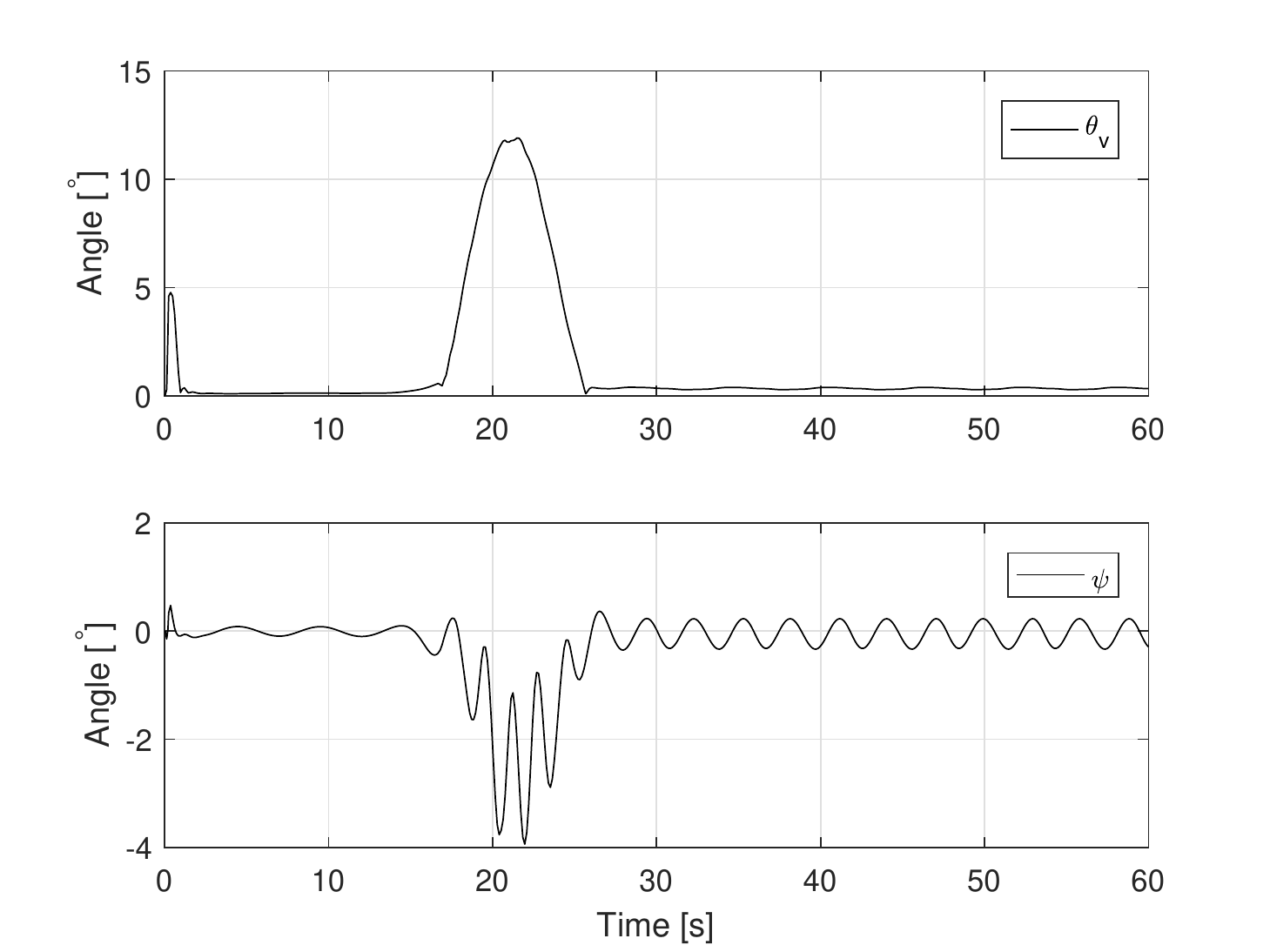}
		\par\end{centering}
	\caption{Attitude tracking - Inclination (top) and yaw angle (bottom).\label{fig:att_err}}
\end{figure}

\begin{figure}[ht!]
	\begin{centering}
		\includegraphics[scale=0.5]{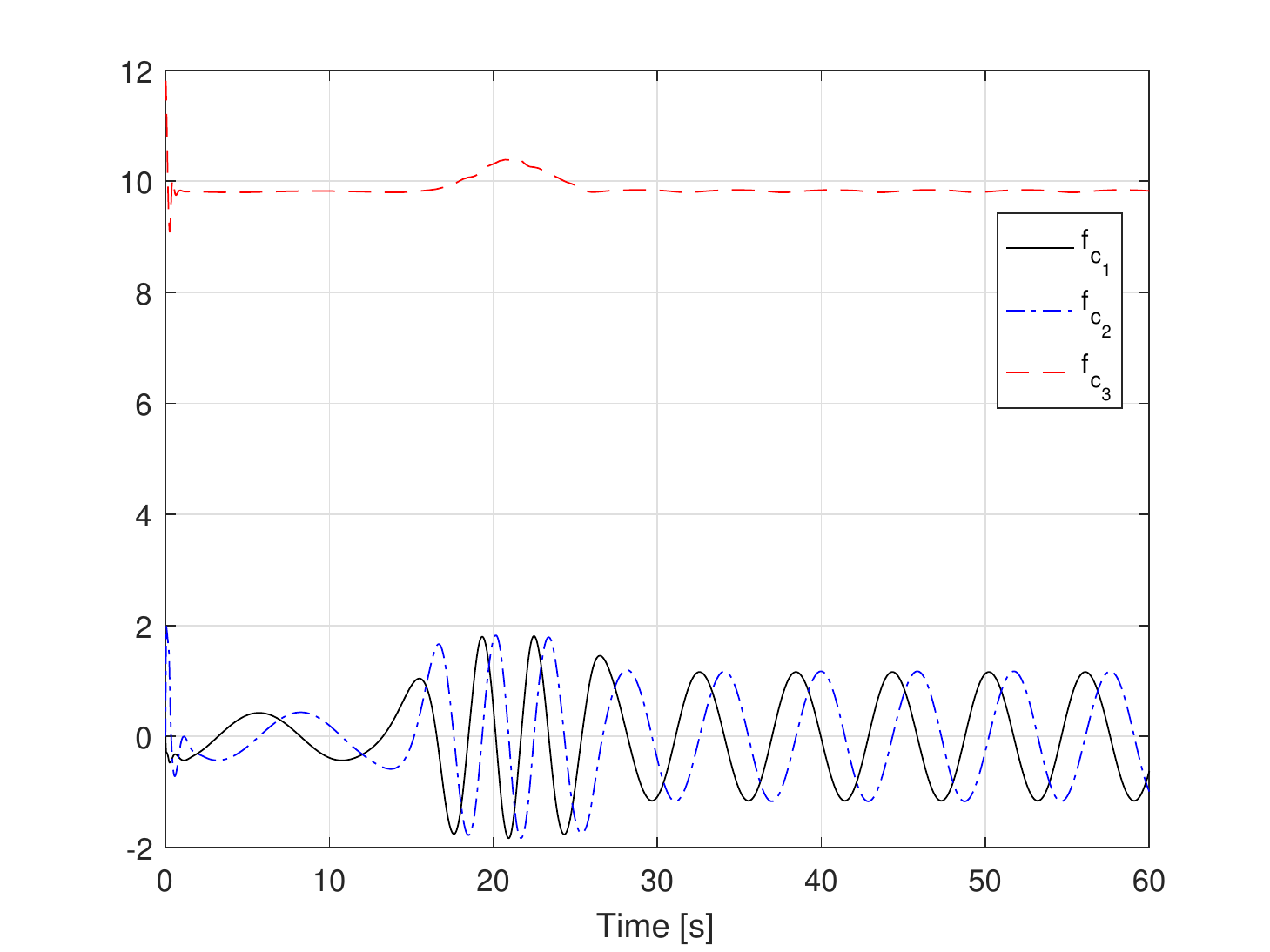}
		\par\end{centering}
	\caption{Control force - $f_c$.\label{fig:control_force}}
\end{figure}

\begin{figure}[ht!]
	\begin{centering}
		\includegraphics[scale=0.5]{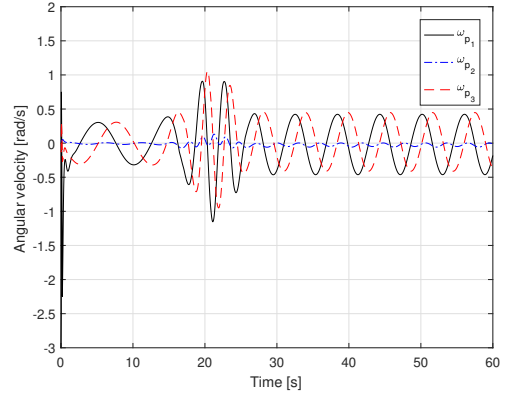}
		\par\end{centering}
	\caption{Angular velocity computed by the attitude planner - $\omega_p$.\label{fig:planner_ang_vel}}
\end{figure}

\section{Proof of the main results}\label{sec:proof}
\subsection{Proof of Theorem \ref{thm:att_asympt}}\label{Proof_att_asympt}
To prove Theorem \ref{thm:att_asympt}, we write the attitude error dynamics as a constrained differential inclusion and then we  apply an invariance principle to prove the asymptotic stability of the desired attractor.  Equations \eqref{att_err_dyn1}, \eqref{att_err_dyn2} describe the evolution of the attitude error dynamics, whose solutions are also solutions of the following differential inclusion
\begin{align}\label{eq:diff_incl}
\left[\begin{array}{c}
\dot{R}_{e}\\
\dot{e}_{\omega}
\end{array}\right]&\in\left[\begin{array}{c}
\begin{array}{c}
F_{R}\left(R_{e},\,e_{\omega}\right)\end{array}\\
F_{\omega}\left(R_{e},\,e_{\omega}\right)
\end{array}\right]=F_a\left(R_{e},\,e_{\omega}\right),
\end{align}
where $F_a$, defined in \eqref{eq:F_a}, is clearly an outer semicontinuous (its graph is closed) and locally bounded set-valued map, and  for each $x_a\in T\mathrm{SO}(3)$, $F_a\left(x_a\right)$ is nonempty and convex.
To prove Theorem \ref{thm:att_asympt} it will be convenient to intersect $F_a\left(x_a\right)$ with the tangent cone to $T\mathrm{SO}(3)$, denoted as $T_{T\mathrm{SO}(3)}(x_a)$ at $x_a$. Indeed the solutions to \eqref{eq:diff_incl} can only flow along the directions in the tangent cone, which simplifies the Lyapunov analysis (see \cite{Seuret2016} for details). To this end, the following lemma is useful:
\begin{lem}\label{flow_intersect}
Given $F_a$ defined in \eqref{eq:F_a} and the closed set $T\mathrm{SO}(3)$, we have the following:
\begin{align}
\label{intersect}
&F_a\left(x_a\right)\cap T_{T\mathrm{SO}(3)}(x_a)=\\
&\quad \underset{\tiny{\begin{array}{c}
R_p\in\mathrm{SO}\left(3\right)\\
\left\Vert \omega_p\right\Vert \leq\omega_M
\end{array}}}{\bigcup}\left[\begin{array}{c}
R_p R_{e}R_p^T\hat{e}_{\omega}\\
-J^{-1}\left(R_p^{T}e_{R}+K_{\omega}e_{\omega}+\hat{e}_{\omega}Je_{\omega}+\hat{e}_{\omega}J\omega_p\right)
\end{array}\right].
\nonumber
\end{align}
\end{lem}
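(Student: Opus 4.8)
The plan is to verify the set identity \eqref{intersect} pointwise in the ambient linear space into which $T\mathrm{SO}(3)$ is embedded, in the spirit of the regularization viewpoint of \cite{Seuret2016}. The first step is to compute the tangent (contingent) cone explicitly. Since $T\mathrm{SO}(3)=\mathrm{SO}(3)\times\mathbb{R}^3$ is a smooth embedded manifold (the product of the Lie group $\mathrm{SO}(3)$ with a linear space), its contingent cone coincides with its tangent space and factorizes; recalling that $T_{R_e}\mathrm{SO}(3)=\{R_eS:\,S\in\mathfrak{so}(3)\}$, I would record
\begin{equation*}
T_{T\mathrm{SO}(3)}(x_a)=\{R_eS:\,S\in\mathfrak{so}(3)\}\times\mathbb{R}^3 .
\end{equation*}

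Next I would test the two block-components of $F_a$ in \eqref{eq:F_a} against this cone separately. The lower (angular-acceleration) block always lies in $\mathbb{R}^3$, so it is never excluded by the intersection and all the work concentrates on the upper block. For the latter I would use the elementary fact that conjugation preserves skew-symmetry, namely $(M\hat e_\omega M^T)^T=-M\hat e_\omega M^T$ for every matrix $M$, so that $R_e^T(R_eR_p\hat e_\omega R_p^T)=R_p\hat e_\omega R_p^T\in\mathfrak{so}(3)$ and the upper block lands inside $T_{R_e}\mathrm{SO}(3)$. For genuine rotations $R_p\in\mathrm{SO}(3)$ the vee-map property \eqref{eq:veemap} then rewrites $R_p\hat e_\omega R_p^T=\widehat{R_pe_\omega}$, producing the representative appearing on the right-hand side of \eqref{intersect}.

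The crux — and the only part that is not bookkeeping — is to show that, once intersection with the tangent cone is imposed, the convex-hull parametrization $R_p\in\overline{\mbox{co}}(\mathrm{SO}(3))$ used in \eqref{eq:F_a} may be replaced by $R_p\in\mathrm{SO}(3)$ without altering the set of admissible velocities, and to establish this by a double inclusion. The inclusion $\supseteq$ is immediate, since $\mathrm{SO}(3)\subset\overline{\mbox{co}}(\mathrm{SO}(3))$ and the corresponding velocities are tangent by the previous step. The reverse inclusion is the delicate one: I would argue it by reducing to the extreme points of $\overline{\mbox{co}}(\mathrm{SO}(3))$, exploiting that the upper block depends on $R_p$ only through the skew matrix $R_p\hat e_\omega R_p^T$ and the lower block only through $R_p^Te_R$ and $R_p\omega_p$, so that the finer structure of the contingent cone together with the coupling between the two blocks is what selects the rotation-generated directions. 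I expect the careful verification that convexification contributes no genuinely new admissible velocity surviving the tangent-cone intersection to be the technically demanding step; once it is in place, reassembling the two blocks and the quantifiers over $R_p$ and $\omega_p$ yields \eqref{intersect}.
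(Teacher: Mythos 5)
Your computation of the contingent cone and your treatment of the lower block coincide with the paper's proof, which likewise identifies $T_{T\mathrm{SO}(3)}(x_a)$ with $T_{R_e}\mathrm{SO}(3)\times\mathbb{R}^3$. But the step you yourself flag as ``the crux'' --- showing that intersecting with the tangent cone lets one replace $R_p\in\overline{\mbox{co}}(\mathrm{SO}(3))$ by $R_p\in\mathrm{SO}(3)$ --- is never actually carried out: you announce a plan (reduction to extreme points) and defer the ``technically demanding step,'' so the proposal is incomplete precisely where the content of the lemma lies. Worse, your own (correct) observation that $M\hat e_\omega M^T$ is skew-symmetric for \emph{every} matrix $M$ shows that the tangent-cone intersection discards nothing: for any $R_p\in\overline{\mbox{co}}(\mathrm{SO}(3))$ the upper block $R_eR_p\hat e_\omega R_p^T$ already lies in $T_{R_e}\mathrm{SO}(3)=R_e\,\mathfrak{so}(3)$. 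Taking $R_p=0$, which belongs to $\overline{\mbox{co}}(\mathrm{SO}(3))$ (average the four diagonal rotations $I_3$, $R_{e_1}(\pi)$, $R_{e_2}(\pi)$, $R_{e_3}(\pi)$), yields an element of $F_a(x_a)\cap T_{T\mathrm{SO}(3)}(x_a)$ with zero upper block; when $e_\omega\neq0$ no genuine rotation $R_p'$ can reproduce it, since $R_eR_p'\hat e_\omega (R_p')^T=0$ forces $e_\omega=0$. So the reverse inclusion you hope to establish fails, and the extreme-point reduction cannot rescue it (the dependence on $R_p$ is quadratic in the upper block, so convex combinations do not pass through).

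For what it is worth, the paper's own three-line proof rests on the asserted equivalence $R_eR_p\hat e_\omega R_p^T\in T_{R_e}\mathrm{SO}(3)\Longleftrightarrow R_p\in\mathrm{SO}(3)$, whose forward implication is exactly what your skew-symmetry remark refutes; you were right not to accept it, but you then needed either to supply a correct argument or to conclude that the identity requires a different justification (e.g., restricting the union in \eqref{eq:F_a} to $\mathrm{SO}(3)$ from the outset, which is all that is used downstream since the true solutions only ever invoke $R_p\in\mathrm{SO}(3)$). As it stands, the proposal neither proves the stated equality nor identifies the repair, so the gap is genuine.
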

\begin{proofof}{\em Lemma \ref{flow_intersect}.}
For a smooth manifold, the tangent cone is equivalent to the tangent space, namely $T_{T\mathrm{SO}(3)} (x_a)=T_{x_a} T\mathrm{SO}(3)$. Since $T\mathrm{SO}(3)$ is a trivial bundle, \emph{i.e.}, $T\mathrm{SO}(3)=\mathrm{SO}(3)\times\mathbb{R}^3$, then we can write $T_{(R_e,e_\omega)}T\mathrm{SO}(3)\simeq T_{R_e} \mathrm{SO}(3)\times \mathbb{R}^3$. Hence, equality \eqref{intersect} follows from the observation that  $R_{e}{R_{p}}\hat{e}_{\omega}R_p^T\in T_{R_e} \mathrm{SO}(3) \longleftrightarrow R_p\in \mathrm{SO}(3)$.
\end{proofof}

To prove asymptotic stability, we employ the following invariance principle, which is a corollary of \cite[Thm 1]{Seuret2016}.
\begin{prop}\label{propLaSalle}
Consider $\dot{x}\in F(x),\,x\in{\mathcal C}$, where ${\mathcal C}$ is a closed set, $F$ is a set-valued mapping outer semicontinuous and locally bounded relative to $\mathcal{C}$, $F\left(x\right)$ is nonempty and convex $\forall x\in{\mathcal C}$. Given a compact set ${\mathcal A}$, if there exists a continuously differentiable function $V$, positive definite and radially unbounded around ${\mathcal A}$ relative to $\mathcal{C}$ and such that 
\begin{equation}\label{eq:V_dot}
\dot{V}(x)=\underset{\,f\in F(x)\cap T_\mathcal{C}(x)}{\text{max}}\left\langle \nabla V(x),\,f\right\rangle \leq0\quad\forall x\in{\mathcal C}\setminus{\mathcal A},
\end{equation} 
then ${\mathcal A}$ is globally stable, namely Lyapunov and Lagrange stable. Furthermore,
\begin{enumerate}
\item if there exists an open neighborhood $\mathcal{U}\supset\mathcal{A}$ from which no complete solution $\gamma$ exists, satisfying $V(\gamma(t)))=V(\gamma(0))\neq0$, then $\mathcal{A}$ is asymptotically stable.
\item Any such neighborhood $\mathcal{U}$ of the form 
\begin{equation}\label{eq:U_sublevet}
\mathcal{U}:=\left\{x\in\mathcal{C}:\,V(x)<\ell\right\},\,\ell>0,
\end{equation}
is contained in the basin of attraction of $\mathcal{A}$.
\end{enumerate}
\end{prop}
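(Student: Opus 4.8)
The plan is to obtain Proposition~\ref{propLaSalle} as a specialization of the general invariance result \cite[Thm 1]{Seuret2016} to the case of a compact attractor $\mathcal{A}$ and the continuously differentiable Lyapunov function $V$. First I would verify that the standing hypotheses on the pair $(F,\mathcal{C})$ --- $\mathcal{C}$ closed, $F$ outer semicontinuous and locally bounded relative to $\mathcal{C}$, with nonempty convex values --- are exactly the regularity conditions under which the cited theorem guarantees existence of solutions and the structural properties of $\omega$-limit sets. The only point deserving comment is the appearance of $F(x)\cap T_{\mathcal{C}}(x)$ in \eqref{eq:V_dot}: since every solution is constrained to evolve in the closed set $\mathcal{C}$, its velocity lies almost everywhere in the intersection of $F(x)$ with the tangent cone $T_{\mathcal{C}}(x)$, so this is precisely the set over which the worst-case directional derivative $\langle \nabla V(x),\,f\rangle$ must be evaluated.

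Next I would establish global stability. Because $V$ is positive definite and radially unbounded around $\mathcal{A}$ relative to $\mathcal{C}$, its sublevel sets $\{x\in\mathcal{C}:\,V(x)\le c\}$ are compact; because \eqref{eq:V_dot} gives $\dot{V}(x)\le 0$ on $\mathcal{C}\setminus\mathcal{A}$, the map $t\mapsto V(x(t))$ is nonincreasing along every solution, so these sublevel sets are forward invariant. Trapping solutions in arbitrarily small sublevel sets around $\mathcal{A}$ yields Lyapunov stability, while forward invariance of every (compact) sublevel set yields boundedness of all maximal solutions, \emph{i.e.}, Lagrange stability. Completeness of solutions follows as well, since a trajectory confined to a compact set on which $F$ is bounded cannot escape in finite time.

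For the asymptotic stability claim of item~1, I would invoke the invariance principle of \cite[Thm 1]{Seuret2016}: along any bounded complete solution, $V$ converges to a limit $r\ge 0$, and the $\omega$-limit set is nonempty, compact and weakly invariant, with $V\equiv r$ on it. If $r\ne 0$, positive definiteness of $V$ places this limit set in $\mathcal{C}\setminus\mathcal{A}$, and weak invariance produces a complete solution $\gamma$ lying in it with $V(\gamma(t))=V(\gamma(0))=r\ne 0$ for all $t$; inside the neighborhood $\mathcal{U}\supset\mathcal{A}$ postulated in item~1 this contradicts the hypothesis that no such solution exists. Hence $r=0$, so the solution converges to $\mathcal{A}$, and together with the stability already proved this gives asymptotic stability. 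For item~2, I would note that a set of the form \eqref{eq:U_sublevet} is forward invariant (again because $V$ is nonincreasing) and contained in a compact sublevel set, so every solution issued from $\mathcal{U}$ is bounded, complete, and remains in $\mathcal{U}$; the convergence argument then applies verbatim, placing $\mathcal{U}$ in the basin of attraction of $\mathcal{A}$.

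The main obstacle throughout is not the monotonicity bookkeeping, which is routine, but the correct handling of the \emph{constrained} differential inclusion on the closed set $\mathcal{C}$: justifying the tangent-cone intersection in \eqref{eq:V_dot} and, above all, the weak invariance of $\omega$-limit sets in this setting. This is exactly the content supplied by \cite[Thm 1]{Seuret2016}, which I would import rather than reprove, reducing the task to checking its hypotheses and extracting the contradicting complete solution $\gamma$ used in item~1.
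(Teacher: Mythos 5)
Your proposal is correct and follows exactly the route the paper takes: the paper states this proposition as a corollary of \cite[Thm 1]{Seuret2016} without further elaboration, and your argument simply fills in the standard steps (compact sublevel sets and monotonicity of $V$ for global stability, weak invariance of $\omega$-limit sets plus the no-constant-nonzero-$V$-solution hypothesis for attractivity, forward invariance of the sublevel set $\mathcal{U}$ for the basin claim). The details you supply, including the justification of the tangent-cone intersection in \eqref{eq:V_dot}, are consistent with the cited invariance principle.
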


\begin{proofof}{\em Theorem \ref{thm:att_asympt}.}
	
Since all solutions to \eqref{att_err_dyn1}, \eqref{att_err_dyn2} are also solution to \eqref{eq:diff_incl}, we prove the theorem by applying Proposition \ref{propLaSalle} to \eqref{eq:diff_incl} with $\mathcal{A}=\{x_a^*\}=\{I_3,0\}$, with the Lyapunov candidate $V_R$ in \eqref{eq:V_R} which is clearly positive definite and radially unbounded. In particular, for $V=V_R$, we first prove \eqref{eq:V_dot} and then we prove item 2) of Proposition \ref{propLaSalle} with $\mathcal{U}= S_a$, according to \eqref{eq:psi_sublevel} and \eqref{eq:U_sublevet}.
\subsubsection*{Verification of \eqref{eq:V_dot}.}
From Lemma \ref{flow_intersect}, we may evaluate $\dot{V}_R$ in \eqref{eq:V_dot} with $F=F_a$ and $F_a\left(x_a\right)\cap T_{T\mathrm{SO}(3)}\left(x_a\right)$ given in \eqref{eq:F_a}, as follows (where we use $e_\omega^T(\hat{e}_\omega a)=0$ and \eqref{eq:e_R}):
\begin{align}
\dot{V}_R(x_a)&=e_\omega^T\left(J \dot{e}_\omega+R_d^T e_R\right)\nonumber\\
&=-e_\omega^T K_\omega e_\omega\leq 0\quad, \forall x_a \in T\mathrm{SO}(3)\label{eq:V_R_dot}
\end{align}
\subsubsection*{Verification of item 2) of Proposition \ref{propLaSalle}.} According to \eqref{eq:U_sublevet} and \eqref{eq:psi_sublevel}, select $\mathcal{U}=\left\{x_a\in T\mathrm{SO}(3):\,V_R(x_a)<\ell\right\}$ for some $\ell>0$. Thanks to the properties of the potential function $\Psi_K$, the sublevel set of the form $\left\{x_a\in     T\mathrm{SO}(3):\,V_R(x_a)<\ell\right\}$,  where $\ell\leq \ell_R$, with $\ell_R$ defined in \eqref{eq:levelset_const}, contains only the desired equilibrium point. Indeed, $V_R(x_a)<\ell_R$ implies $\Psi_K(R_e)<\ell_R$ for any $x_a\in T\mathrm{SO}(3)$ and only the desired equilibrium point is contained in this sublevel set.
Furthermore, since this set is forward invariant from \eqref{eq:V_R_dot} and the viability condition of   \cite[Pag. 124]{Goebel2012}) is satisfied, solutions starting in $\mathcal{U}$ are complete. We refer now to a solution $t\mapsto \gamma(t)$ starting in $\mathcal{U}$ for which $V_R(\gamma(0))=a\neq0$. Then, if we consider  $\gamma(0)\in\left\{x_a\in\mathcal{U}\,:e_\omega\neq0\right\}$, the function $V_R(\gamma(t))$ has to decrease in time by continuity, which implies $\dot{V}_R(\gamma(t))<0$. Instead, if $\gamma(0)\in\mathcal{G}:=\left\{x_a\in\mathcal{U}:\, e_\omega=0\right\}\setminus \{x_a^*\}$, then, according to the closed-loop dynamics
\begin{equation}
F_a(x_a)\bigr\vert_{\mathcal{G}}=
\underset{\tiny{\begin{array}{c}
R_p\in\overline{\mbox{co}}\left(\mathrm{SO}\left(3\right)\right)\\
\left\Vert \omega_p\right\Vert \leq\omega_M
\end{array}}}{\bigcup}\left[\begin{array}{c}
0\\
-J^{-1}R_{p}^{T}e_{R}
\end{array}\right],
\end{equation}
$\gamma(t)$ will exit the set $\mathcal{G}$ for some small $t>t_0$, since $e_R\neq 0\;\forall x_a\in\mathcal{U}\setminus \{x_a^{*}\}$. As a consequence, $V_R(\gamma(t))$ is forced to decrease again. We can conclude that there is no complete solution that keeps $V_R(\gamma(t))$ constant and different from zero. Therefore, from Proposition \ref{propLaSalle} the proof is complete.
\end{proofof}

\section{Conclusions \label{sec:conclusions}}
In this paper the trajectory tracking control problem for VTOL UAVs with
and without thrust-vectoring capabilities has been addressed. We proposed a priority-oriented control paradigm, in which position tracking is the primary objective. This has been obtained by introducing an attitude planner, in charge of providing a modified attitude reference, which guarantees that the control force, required to track the desired position, can always be delivered by the actuation mechanism. 
Two numerical simulations have been performed by applying the proposed control strategy to a vectored-thrust and to a thrust-vectoring multirotor UAV. The robustness of the control law has been tested by accounting for actuators dynamics, unknown mass distribution, aerodynamic drag and  disturbance torque which were not included in the control design model.  
Future work will be oriented to improve the position tracking performance by considering performance-oriented feedback stabilizers. Indeed, nested saturations, although very robust, are known to be a conservative solution from the performance point of view \cite{Marchand2005a}. For instance, the use of the Quasi Time-Optimal stabilizer for the saturated double integrators, proposed in \cite{Forni2010}, could be considered as an alternative, performance-oriented, solution. 
However, the stability analysis of the closed-loop is not trivial as it is not straightforward to guarantee the $ISS$ property with respect to perturbations. Therefore, this extension may require to employ different mathematical tools to address the cascade analysis.
  
\appendix
\section{Appendix}
\subsection{Proof of Proposition \ref{prop:scaling_funct}}\label{proof_scaling_funct}
First of all, to ease the notation in the following developments, consider the definition,
based on \eqref{eq:att_err}:
\begin{align}
R_p^T R_e R_p&=R_p^T R R_p^T R_p
=R_p^TR:=R_e^r,
\label{eq:righterrordef}
\end{align} 
which is known in the literature as the \emph{right} attitude error \cite{Bullo1999}.
We prove Proposition \ref{prop:scaling_funct} for the general case 
corresponding to fixing any scalar $\psi_M>\max_{R \in SO(3)} \Psi_K(R)$ 
and selecting
\begin{align}
c(R_e,t):=\frac{\psi_M-\Psi_K(R_p^T R_e R_p)}{\psi_M}=\frac{\psi_M-\Psi_K\left(R_e^r\right)}{\psi_M}=:\bar c\left(R_e^r\right).
\label{eq:cnew}
\end{align}
In particular, the selection of $c(R_e,t)$ in the statement of the proposition corresponds to 
\eqref{eq:cnew} with
 $\psi_M=\ell$ and
$K=e_3 e_3^T$.

Note that given $\Psi^2(R_e^r):=\frac{1}{4}\mbox{tr}(I_3-R_e^r)\in[0,1]$ (the normalized distance on $\mathrm{SO}(3)$), the following equality holds:
\begin{equation}
\mbox{tr}(R_e^r)=3-4\Psi^2(R_e^r).
\label{eq:normdist}
\end{equation}
Using \eqref{eq:righterrordef} and \eqref{eq:normdist}, the term $\Delta R$ in equation \eqref{eq:att_mismatch} is bounded by (where we use the cyclic property of the trace function):
\begin{align}
&\Vert \Delta R\Vert \leq \left\Vert \bar c\left(R_e^r\right)R_p R_e^r R_p^T-I_3\right\Vert
\leq \left\Vert \bar c\left(R_e^r\right)R_p R_e^r R_p^T -I_3\right\Vert _{F}\nonumber\\
&=\sqrt{\mbox{tr}\left(\left(\bar c\left(R_e^r\right)R_p R_e^r R_p^T-I_3\right)^{T}\left(\bar c\left(R_e^r\right)R_p R_e^rR_p^T-I_3\right)\right)}\nonumber\\
&=\sqrt{\mbox{tr}\left(R_p \left( \bar c\left(R_e^r\right)^{2}I_3-\bar c\left(R_e^r\right)\left(R_e^r+\left(R_e^r\right)^{T}\right)+I_3\right) R_p^T \right)}\nonumber\\
&=\sqrt{3\left(1+\bar c\left(R_e^r\right)^{2}\right)-\bar c\left(R_e^r\right)\mbox{tr}\left(R_e^r+\left(R_e^r\right)^{T}\right)}\nonumber\\
&=\sqrt{3\left(1+\bar c\left(R_e^r\right)^{2}\right)-2\bar c\left(R_e^r\right)\mbox{tr}\left(R_e^r\right)}\nonumber\\
&= \sqrt{3\left(1+\bar c\left(R_e^r\right)^{2}\right)-2\bar c\left(R_e^r\right)\left(3-4\Psi^2\left(R_e^r\right)\right)}\nonumber\\
&=\sqrt{3\left(\bar c\left(R_e^r\right)-1\right)^2+8\bar c\left(R_e^r\right)\Psi^2\left(R_e^r\right)}.
\end{align}
Therefore, by substituting $0<\bar c\left(R_e^r\right)\leq 1$ from \eqref{eq:cnew},
\begin{align}
\Vert \Delta R\Vert&\leq\sqrt{3 \tfrac{\Psi_K^2(R_e^r)}{\psi_{M}^2}+8\Psi^2(R_e^r)}\leq\sqrt{\tfrac{12+8\psi_M^2}{\psi_{M}^2}}\Psi(R_e^r) =:\varrho\Psi(R_e^r)
\end{align}
where we have used the inequalities $\Psi_K(R_e^r) \leq 2\Psi^2(R_e^r)$ and $\Psi^4(R_e^r)\leq\Psi^2(R_e^r)\; \forall R_e^r\in\mathrm{SO}(3)$ (by definition $\Psi(R_e^r)\in[0,1]$). 

\subsection{Proof of Lemma \ref{lem:feas_ref}}\label{proof_quad_feas_ref}
The proofs that $\dot{R}_p=R_p\hat{\omega}_p$ and that $\omega_{p}(\cdot)\in C^1$ are an immediate consequence of the definitions given in equations \eqref{eq:quad_ang_vel}, \eqref{eq:quad_ang_acc} and the assumed smoothness properties of the relative angular velocity $\omega_r$. By definition of $\omega_p$ in \eqref{eq:planner_ang_vel}, one gets the following inequality:
\begin{equation}
	\Vert \omega_p \Vert = \Vert R_r^T(\omega_c+\omega_r)\Vert=\Vert \omega_c+\omega_r\Vert\leq \Vert \omega_c\Vert +\Vert \omega_r\Vert
\end{equation}
Hence, $\omega_p$ is bounded as long as $\omega_c$ is bounded because $\omega_r\in L_\infty$ by assumption. To prove the boundedness of $\omega_c$, we first note that:
\begin{equation}\label{eq:omega_c_mod}
\Vert \omega_c \Vert=\frac{1}{2}\Vert b_{c_1}\times\dot{b}_{c_1}+b_{c_2}\times\dot{b}_{c_2}+b_{c_3}\times\dot{b}_{c_3}\Vert,
\end{equation}
by resorting to Poisson's formula. Thus, the bound on $\omega_c$ is:
\begin{equation}
\Vert \omega_c \Vert\leq \frac{1}{2}\left(\Vert \dot{b}_{c_1} \Vert+\Vert \dot{b}_{c_2} \Vert+\Vert \dot{b}_{c_3} \Vert\right).
\end{equation}
The time derivatives of the unit vectors are:
\begin{align}
\dot{b}_{c_1}&=  \dot{b}_{c_2}\times b_{c_3}+ b_{c_2}\times \dot{b}_{d_3}\nonumber\\
\dot{b}_{c_2} & =  \frac{\dot{b}_{c_3}\times b_{d}+b_{c_3}\times \dot{b}_{d}}{\Vert b_{
c_3}\times b_{d}\Vert}\\
&\quad -\left(b_{c_3}\times b_{d}\right)^T	\left(\dot{b}_{c_3}\times b_{d}+b_{c_3}\times \dot{b}_{d}\right)\frac{b_{c_3}\times b_{d}}{\Vert b_{c_3}\times b_{d}\Vert^3} \nonumber\\
\dot{b}_{c_3} & =  \frac{\dot{f}_{d}}{\Vert f_{d}\Vert}-\left(\dot{f}_{d}^{T}f_{d}\right)\frac{f_{d}}{\Vert f_{d}\Vert^3}.
\end{align}
By exploiting the triangular inequality, one gets:
\begin{align}
\Vert \dot{b}_{c_1} \Vert & \leq  \Vert\dot{b}_{c_2}\Vert+\Vert \dot{b}_{c_3}\Vert\label{eq:bc1_bound}\\
\Vert \dot{b}_{c_2} \Vert  & \leq 2 \frac{\Vert \dot{b}_{c_3}\Vert +\Vert\dot{b}_{d}\Vert}{\Vert b_{c_3}\times b_{d}\Vert}\label{eq:bc2_bound}\\
\Vert \dot{b}_{c_3} \Vert  & \leq 2 \frac{\Vert\dot{f}_{d}\Vert}{\Vert f_{d}\Vert}.\label{eq:bc3_bound}
\end{align}
First of all, note that the denominators $\Vert b_{c_3}\times b_{d}\Vert$ and $\Vert f_d \Vert$ are well defined  (bounded and different from zero) $\forall t\geq 0$ according to Remark \ref{prop:goodR_c} and the choice of the reference heading direction $b_d$ in \eqref{eq:bd}. 
Then, the boundedness of $\{\dot{b}_{c_1},\dot{b}_{c_2},\dot{b}_{c_3}\}$ holds since $\Vert f_{d}\Vert\leq f_M$ according to \eqref{eq:f_d_max} and $\Vert \dot{f}_{d}\Vert\leq \dot{f}_M$ thanks to the feedback stabilizer properties $\beta(e_x,\,e_v)$ selected as in \eqref{eq:betadef}. 
For what concerns  $\dot{f}_d$, by differentiating equation \eqref{eq:des_force}, we get $
\dot{f}_d =\dot{\beta}(e_x,e_v)+m\ddot{v}_d =\nabla_{e_x}\beta(e_x,e_v)e_v+\nabla_{e_v}\beta(e_x,e_v)\dot{e}_v+m\ddot{v}_d$,
from which the following inequality is derived:
\begin{align}\label{eq:fd_dot}
\Vert\dot{f}_d\Vert \leq\Vert \nabla_{e_x}\beta(e_x,e_v)\Vert \Vert e_v\Vert+\Vert\nabla_{e_v}\beta(e_x,e_v)\Vert \Vert\dot{e}_v\Vert+m\Vert\ddot{v}_d\Vert.
\end{align}
By recalling equations \eqref{pos_err_dyn2}, \eqref{eq:f_d_max} and Property \ref{propty:van_pert}, the acceleration error is bounded by:
\begin{align}
m\Vert\dot{e}_{v}\Vert	&\leq \Vert\beta(e_x,\,e_v)\Vert+  \Vert \Delta R\left(R_{e},e_{w},t\right)\Vert \Vert f_d\Vert\nonumber\\
&\leq \beta_M+\gamma(V_a)f_M=\sqrt{3}\lambda_2+\gamma(V_a)f_M.
\end{align}
Since the gradient $\nabla\beta(e_x,e_v)$ of the nested saturation stabilizer in \eqref{eq:betadef} is bounded and $\nabla_{e_x}\beta(e_x,e_v)$ vanishes outside the set $\varepsilon_{e_M}:=\left\{(e_x,e_v)\in\mathbb{R}^6:\vert e_{v_i}\vert\geq \lambda_1+\tfrac{\lambda_2}{k_2},\,i=1,2,3\right\}$ (by suitably selecting the saturation function in \eqref{eq:betadef}), $\Vert \dot{f}_d\Vert$ in \eqref{eq:fd_dot} is bounded as long as $m\Vert \ddot{v}_d\Vert$ is bounded (Assumption \ref{assum_setpoint}). Hence, $\dot{b}_{c_3}$ is bounded and, by inspecting \eqref{eq:bc2_bound}, also $\dot{b}_{c_2}\in   L_\infty$,  because $\dot{b}_d\in L_\infty$ as it is dependent on $\omega_d$ (which is bounded according to Assumption \ref{assum_setpoint}). By combining these results, we get that also $\dot{b}_{c_1}\in L_\infty$ according to \eqref{eq:bc1_bound}. Finally, by referring to \eqref{eq:omega_c_mod}, $\omega_c$ is bounded as well and, therefore, $\omega_p \in L_\infty$.

\bibliographystyle{plain}
\bibliography{thesis}

\end{document}